\newtheorem{theorem}{Theorem}[section]
\newtheorem{lemma}{Lemma}[section]
\newaliascnt{corollary}{theorem}
\newaliascnt{proposition}{theorem}
\newtheorem{proposition}[proposition]{Proposition}
\newtheorem{definition}{Definition}[section]
\newtheorem{remark}{Remark}
\newaliascnt{assumption}{theorem}
\newcommand{\bbm}{\begin{bmatrix}}
	\newcommand{\ebm}{\end{bmatrix}}
\numberwithin{equation}{section}
\begin{document}
	
\title{\huge On the Square Root of Wishart Matrices: Exact Distributions and Asymptotic Gaussian Behavior }

 \author{
	Fengcheng Liu\footnote{Email: lfc2022@mail.ustc.edu.cn}
    \\
    \\
    Department of Probability and Statistics, School of Mathematical Sciences, \\
    University of Science and Technology of China
}

\date{}
	
\maketitle
	
\begin{abstract}
Random matrix theory has become a cornerstone in modern statistics and data science, providing fundamental tools for understanding high-dimensional covariance structures. Within this framework, the Wishart matrix plays a central role in multivariate analysis and related applications.
  This paper investigates both the exact and asymptotic distributions of the square root of a standard Wishart matrix. We first derive the exact distribution of the square root matrix. Then, by leveraging the Bartlett decomposition, we establish the joint asymptotic normality of the upper-triangular entries of the square root matrix. The resulting limiting distribution resembles that of a scaled Gaussian Wigner ensemble. Additionally, we quantify the rate of convergence using the 1-Wasserstein distance. To validate our theoretical findings, we conduct extensive Monte Carlo simulations, which demonstrate rapid convergence even with relatively low degrees of freedom. These results offer refined insights into the asymptotic behavior of random matrix functionals.

\vspace{0.1in}

\noindent{\textbf{Keywords}}: Wishart distribution, matrix square root, asymptotic normality, convergence rate, Monte Carlo simulation

    \end{abstract}

\section{Introduction}

The Wishart matrix (or ensemble) is a positive-definite random matrix following a Wishart distribution, which arises in a wide range of applications, including multivariate Gaussian modeling, stochastic volatility, random matrix theory, and computational statistics (\cite{And}; \cite{Jon}). The Bartlett decomposition(\cite{Bar}) of a Wishart matrix—typically regarded as a non-symmetric square root—expresses the matrix as the product of a lower-triangular matrix and its transpose. While the Bartlett decomposition has proven to be a powerful tool for developing theoretical properties of the Wishart distribution and for generating random positive-definite matrices (e.g., \cite{Cao}; \cite{Lee}), the symmetric square root—which preserves the eigenbasis of the Wishart matrix—has received comparatively little attention. This paper addresses this gap by analyzing the exact distribution, the asymptotic distribution, and the convergence rate of the upper-triangular entries of the symmetric square root matrix.






To situate our work within the existing literature, we briefly review related results on the Wishart distribution and its matrix square roots.
The Wishart distribution plays a central role in multivariate statistics, and its fundamental properties have been well established in classical works(\cite{Cao}; \cite{Muirhead}).
It serves as the canonical distribution of sample covariance matrices and has been widely applied in hypothesis testing, estimation, and dimensionality‐reduction methods.

In high‐dimensional settings, the asymptotic behavior of sample covariance matrices drawn from Wishart laws has been extensively studied (\cite{Bai}; \cite{Joh}; \cite{Led}).
Much of this literature focuses on spectral quantities such as eigenvalues, traces, and log‐determinants, which admit elegant limiting distributions under large‐dimension asymptotics.
Beyond eigenvalue analysis, recent studies have also examined fluctuations of matrix functionals and random matrix transformations (\cite{Cou}; \cite{Tao}).

Research on matrix square roots of Wishart matrices has received comparatively less attention, though it appears in areas such as matrix decomposition algorithms, stochastic processes on manifolds, and covariance regularization. 
Notably, the Bartlett decomposition provides a tractable factorization of Wishart matrices that naturally yields expressions for their square roots.
Building upon these foundational results, our work aims to derive the exact and asymptotic distributions of the square root of a standard Wishart matrix and to quantify its convergence properties.

We begin by recalling the definition of the Wishart distribution. The Wishart distribution is a probability distribution over symmetric positive-definite matrices. It can be viewed as a multivariate generalization of the chi-squared distribution and plays a fundamental role in multivariate statistics, particularly in the estimation of covariance matrices.
\begin{definition}
    Let $\boldsymbol{X}_{1}, \boldsymbol{X}_{2}, \ldots, \boldsymbol{X}_{m}$ be independent and identically distributed (i.i.d.) random vectors from a multivariate normal distribution with mean vector $\boldsymbol{0} \in \mathbb{R}^{p}$ and covariance matrix $\Sigma \in \mathbb{R}^{p \times p}>0$, i.e.,
$\boldsymbol{X}_{i} \sim {N}_{p}(\boldsymbol{0}, \Sigma), \quad i=1, \ldots, m$. 
Define  
$$
W =\sum_{i=1}^{m} \boldsymbol{X}_{i}\boldsymbol{X}_i^{\top}.
$$
Then $W$ follows a Wishart distribution with $m$ degrees of freedom and scale matrix $\Sigma$, denoted by
$$W  \sim W_{p}(m, \Sigma).$$

\end{definition}

When $\Sigma = I_p$, the Wishart distribution $W_p(m, I_p)$ is often denoted by $W_p(m)$ and referred to as the standard Wishart distribution. This special case is the primary focus of our study. Throughout this paper, we assume $m \ge p$ to ensure that $W$ is almost surely positive definite.

The symmetric square root of $W$ is defined as the unique positive definite matrix $V$ satisfying $W = V^2$,  denoted by $V = W^{\frac{1}{2}}$.  In many practical applications, it is the symmetric square root $W^{\frac{1}{2}}$, rather than the original Wishart matrix $W$, that serves as the key quantity of interest.  
For instance, the square root of a sample covariance matrix (which is proportional to the Wishart square root) is often used to generate samples with a prescribed covariance structure. 
In wireless communications, the Wishart square root also plays a vital role in optimal precoding for massive MIMO (multiple-input multiple-output) channel modeling.

We then introduce some notation. For any $k\times k$ real-valued matrix $G=(g_{sj})_{1\le s,j\le k}$, 
We   denote $(g_{11},g_{22},\ldots,g_{kk},g_{12},\ldots,g_{1k},\ldots,g_{k-1,k})^\top$ by $G_{up}$, and $(g_{11},g_{22},\ldots,g_{kk}, g_{21},\ldots,g_{k1},\ldots,g_{k,k-1})^\top$ by $G_{lw}$.
The Frobenius norm of matrix $G$ is defined as  
$\|G\|=\sqrt{\sum_{s,j=1}^kg_{sj}^2}$. It is easily verified  that, whenever $G$ is symmetric, $\|G\|^2$ is equal to the square sum of $G$'s eigen values.  Besides, for any two  $k\times k$ real-valued matrix $A$ and $B$, the following inequalities hold:   $\|A+B\|\le \|A\|+\|B\|$ and $\|AB\|\le \|A\|\cdot\|B\|$.

The remainder of this paper is organized as follows.  Section 2 derives the exact distribution of $(W_p(m))^\frac{1}{2}$. 
 Section 3 presents the asymptotic law of $(W_p(m))^\frac{1}{2}$. Section 4 provides an upper bound for the rate of convergence. Section 5 reports the results of numerical experiments. Section 6 offers concluding remarks and further discussion.

\section{Exact distribution of $(W_p(m))^{\frac{1}{2}}$}
In this section, we derive the density function of $(W_p(m))^{\frac{1}{2}}$. 
We consider a more general case, namely the density of $(W_p(m,\Sigma))^{\frac{1}{2}}$ for any positive definite $p\times p$ matrix $\Sigma$.
To achieve this, we first propose the density of $W_p(m,\Sigma)$, as shown in \autoref{density}.
\begin{proposition}\label{density}

    Suppose that $W\sim W_p(m,\Sigma)$. Then, $W$ has the following density function
    \begin{equation*}
        f(W)=\frac{1}{2^{mp/2}|\Sigma|^{m/2}\Gamma_p\left(\frac{m}{2}\right)}|W|^{(m-p-1)/2}\exp\left(-\frac{1}{2}\operatorname{tr}(\Sigma^{-1}W)\right),\quad \forall \  W\in\mathbb{R}^{p\times p},W>0,
    \end{equation*}
    where $\Gamma_p(a) = \pi^{p(p-1)/4} \prod_{i=1}^p\Gamma\left(a-\frac{i-1}{2} \right)
    $ denotes the multivariate Gamma function for $a>(p-1)/2$.
\end{proposition}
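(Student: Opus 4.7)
The plan is to use a change-of-variables argument starting from the joint Gaussian density of $X_1,\ldots,X_m$. First I would reduce to the case $\Sigma=I_p$: setting $Y_i=\Sigma^{-1/2}X_i\sim N_p(0,I_p)$, one has $\widetilde W:=\sum_i Y_iY_i^\top=\Sigma^{-1/2}W\Sigma^{-1/2}\sim W_p(m,I_p)$. Once the density of $\widetilde W$ is established, the density of $W$ follows from the Jacobian of the symmetric-matrix transformation $\widetilde W\mapsto\Sigma^{1/2}\widetilde W\Sigma^{1/2}$, which contributes $|\Sigma|^{-(p+1)/2}$, and combines cleanly with $|\widetilde W|=|\Sigma|^{-1}|W|$ and $\mathrm{tr}(\widetilde W)=\mathrm{tr}(\Sigma^{-1}W)$ to produce the stated form.

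For the standard case, I would assemble the sample into the $p\times m$ matrix $X=[X_1,\ldots,X_m]$ with i.i.d.\ $N(0,1)$ entries, so its joint density is $(2\pi)^{-mp/2}\exp\!\bigl(-\tfrac12\mathrm{tr}(XX^\top)\bigr)$. Since $m\ge p$, almost surely $X=TQ$ where $T$ is $p\times p$ lower triangular with positive diagonal and $Q$ lies on the Stiefel manifold $V_{p,m}=\{Q\in\mathbb{R}^{p\times m}:QQ^\top=I_p\}$. The Jacobian of this decomposition is $\prod_{i=1}^p t_{ii}^{m-i}$, and integrating $Q$ against its invariant measure on $V_{p,m}$ contributes the volume $2^p\pi^{mp/2}/\Gamma_p(m/2)$. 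This yields the joint density of the entries of $T$.

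Next, I would change variables from $T$ to $W=TT^\top$. This map is a bijection from lower-triangular matrices with positive diagonal onto the cone of positive-definite symmetric matrices, with Jacobian $2^p\prod_{i=1}^p t_{ii}^{p-i+1}$. Using $\mathrm{tr}(W)=\mathrm{tr}(TT^\top)=\mathrm{tr}(XX^\top)$ and $|W|=\prod_i t_{ii}^2$, the accumulated powers of $t_{ii}$ collapse into the factor $|W|^{(m-p-1)/2}$, and the remaining constants regroup into the asserted normalization $2^{mp/2}\,\Gamma_p(m/2)$.

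The main obstacle is the careful bookkeeping of the two Jacobians together with the Stiefel volume, particularly aligning the exponent $m-i$ from the $QR$-type factorization with the exponent $p-i+1$ from $T\mapsto TT^\top$. If this route is deemed too delicate, an alternative is to compute the moment generating function $E[\exp(\mathrm{tr}(\Theta W))]=|I_p-2\Theta\Sigma|^{-m/2}$ directly from the Gaussian MGF applied to each $X_iX_i^\top$, and then verify by a matrix Laplace-transform computation (itself requiring an integration over positive-definite matrices evaluable via the same Bartlett change of variables) that the proposed density has the matching transform; uniqueness of the MGF then concludes.
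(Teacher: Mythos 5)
Your proof is correct. The paper offers no proof of \autoref{density}, merely citing Muirhead (2005); your argument reproduces the standard derivation from that reference---reducing to $\Sigma = I_p$, factoring the Gaussian data matrix into a lower-triangular factor $T$ and a Stiefel-manifold factor, integrating out the latter against the invariant measure, and then changing variables from $T$ to $W = TT^\top$---and the bookkeeping of Jacobians and normalizing constants you describe assembles correctly to the stated density.
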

A proof of this proposition can be found in \cite{Muirhead}. 
 We will compute the  Jacobian $J(W\to V:=W^{1/2})$ of the matrix square-root transformation,  based on the  result in  \cite{Muirhead}.
 
\begin{proposition}\label{jacobp}
    Let $W$ be a real, symmetric, and positive definite $p \times p$ matrix. Suppose it admits the spectral decomposition
$W = H \Lambda H^\top$, 
where $H \in \mathbb{R}^{p \times p}$ is an orthogonal matrix satisfying $H^\top H = HH^\top = I_p$, and $\Lambda = \text{diag}(\lambda_1, \ldots, \lambda_p)$ with ordered eigenvalues $\lambda_1 \ge \cdots \ge \lambda_p > 0$. 
Then, the Jacobian of the transformation from $W$ to its eigenvalue-eigenvector pair $(\Lambda, H)$ is given by
$$J(W \to (\Lambda, H)) = c_p \prod_{i < j} (\lambda_i - \lambda_j),$$
where
$c_p = \frac{2^p \pi^{p^2/2}}{\Gamma_p(p/2)}$. 
\end{proposition}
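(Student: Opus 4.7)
The plan is to differentiate the spectral decomposition $W = H\Lambda H^\top$ and extract the Jacobian in two stages: first from $dW$ to the pair $(d\Lambda, S)$, where $S := H^\top dH$ is skew-symmetric, and then from $S$ to the invariant (Haar) measure on $O(p)$. This is the standard route followed in Muirhead's treatment of Jacobians for symmetric-matrix transformations.

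First I would differentiate the identity $W = H\Lambda H^\top$ to obtain
\begin{equation*}
  dW = dH\,\Lambda H^\top + H\,d\Lambda\,H^\top + H\Lambda\,dH^\top,
\end{equation*}
and then pre- and post-multiply by $H^\top$ and $H$. Differentiating $H^\top H = I_p$ shows that $S := H^\top dH$ is skew-symmetric, so
\begin{equation*}
  H^\top dW\,H \;=\; d\Lambda + S\Lambda - \Lambda S.
\end{equation*}
Because $A \mapsto H^\top A H$ is an orthogonal similarity on the space of $p \times p$ symmetric matrices, its Jacobian equals one, so no factor is introduced at this step.

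Next, I would compare entries on both sides. The diagonal gives $d\lambda_i$ (since $S\Lambda - \Lambda S$ has zero diagonal when $S$ is skew-symmetric), while for $i<j$ the off-diagonal entry equals $(\lambda_j - \lambda_i)\,S_{ij}$. Forming the wedge product over the upper-triangular entries of the symmetric matrix $H^\top dW\,H$ therefore produces exactly the linear factor $\prod_{i<j}(\lambda_i - \lambda_j)$, where the ordering $\lambda_1 \geq \cdots \geq \lambda_p$ eliminates absolute values and fixes the sign. This already accounts for the Vandermonde-type product appearing in the statement.

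The final step is to convert the wedge of the independent skew-symmetric entries $\{S_{ij}\}_{i<j}$ into the normalized invariant (Haar) measure on $O(p)$, restricted to a fundamental domain in which the spectral decomposition is unique (eigenvalues strictly ordered, column signs of $H$ fixed). The constant $c_p$ then arises purely from this normalization and matches the classical orthogonal-group volume expression $c_p = 2^p\pi^{p^2/2}/\Gamma_p(p/2)$. The main obstacle is exactly this last identification: one must carefully pin down the fundamental domain for $(\Lambda, H)$ and the normalization of the Stiefel/orthogonal-group volume form in order to read off the constant explicitly, and this is the delicate book-keeping step that the proposition outsources to Muirhead.
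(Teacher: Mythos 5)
The paper does not prove \autoref{jacobp} at all; it is quoted verbatim from Muirhead (2005) (essentially Theorem~3.2.17 there) and used as a black box in \autoref{jacob}, where the constant $c_p$ cancels between the forward and inverse spectral-decomposition Jacobians and hence never needs to be evaluated explicitly. Your sketch reproduces exactly the differential-form derivation underlying Muirhead's result: differentiating $W=H\Lambda H^\top$, conjugating by $H$ to reach $H^\top dW\,H = d\Lambda + S\Lambda - \Lambda S$ with $S=H^\top dH$ skew-symmetric, reading off the Vandermonde factor from the off-diagonal entries $(\lambda_j-\lambda_i)S_{ij}$, and absorbing the remaining wedge of $\{S_{ij}\}_{i<j}$ into the invariant volume form on $O(p)$. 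That is correct, and your observation that the delicate part is pinning down the fundamental domain (strict ordering of eigenvalues, sign convention on the columns of $H$, accounting for the $2^p$-fold redundancy) and the normalization giving $c_p = 2^p\pi^{p^2/2}/\Gamma_p(p/2) = \mathrm{Vol}(O(p))$ is an honest and accurate reading of where the real work lies; the paper deliberately sidesteps that bookkeeping by citation, and in fact does not need the explicit value of $c_p$ anywhere. So your approach is the same one the cited source uses, and it is sound at the level of detail you give.
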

From this proposition, we derive the Jacobian $J(W\to V:=W^{1/2})$, as shown in \autoref{jacob}.
\begin{lemma}\label{jacob}
     Suppose that $W_{p\times p}>0$. Then,
     \begin{equation*}
         J(W\to V:=W^{1/2})=\prod_{1\leq i\leq j\leq p}(\delta_i+\delta_j)=2^p\prod_{i=1}^p\delta_i\prod_{i<j}(\delta_i+\delta_j),
     \end{equation*}
     where 
     $\{\delta_j\}_{j=1}^p$ are the eigenvalues of $V$. 
\end{lemma}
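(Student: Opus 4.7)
The plan is to obtain $J(W\to V)$ by passing through the common spectral decomposition and applying \autoref{jacobp} to both $W$ and $V$. Since $V$ is the symmetric positive definite square root of $W$, if $W=H\Lambda H^\top$ with $\Lambda=\diag(\lambda_1,\ldots,\lambda_p)$ then $V=H\Delta H^\top$ with $\Delta=\diag(\delta_1,\ldots,\delta_p)$ and $\lambda_i=\delta_i^2$; crucially, $W$ and $V$ share the same orthogonal factor $H$.

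First, I would write the exterior differential form expressions supplied by \autoref{jacobp} for both matrices:
\begin{equation*}
dW \;=\; c_p\prod_{i<j}(\lambda_i-\lambda_j)\, d\lambda_1\cdots d\lambda_p \,(H^\top dH),\qquad dV \;=\; c_p\prod_{i<j}(\delta_i-\delta_j)\, d\delta_1\cdots d\delta_p\,(H^\top dH),
\end{equation*}
where $(H^\top dH)$ denotes the wedge product of the off-diagonal entries of the skew-symmetric matrix $H^\top dH$. Because the two decompositions involve the same $H$, the same factor $(H^\top dH)$ appears in both, and it will cancel when we take the ratio.

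Next, I would perform the change of variables $\lambda_i=\delta_i^2$ in the expression for $dW$. This produces $d\lambda_1\cdots d\lambda_p = 2^p\prod_{i=1}^p\delta_i\, d\delta_1\cdots d\delta_p$, and the Vandermonde factorizes as $\prod_{i<j}(\lambda_i-\lambda_j)=\prod_{i<j}(\delta_i-\delta_j)(\delta_i+\delta_j)$. Substituting into the formula for $dW$ and dividing by the formula for $dV$, the constants $c_p$, the common eigenvector form $(H^\top dH)$, and the factor $\prod_{i<j}(\delta_i-\delta_j)$ all cancel, leaving
\begin{equation*}
J(W\to V)\;=\;2^p\prod_{i=1}^p\delta_i\prod_{i<j}(\delta_i+\delta_j).
\end{equation*}
Finally, recognizing that $2\delta_i=\delta_i+\delta_i$ absorbs the diagonal terms into the product over $i\le j$ yields the equivalent form $\prod_{1\le i\le j\le p}(\delta_i+\delta_j)$.

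The main subtlety, rather than any hard calculation, is justifying that the $(H^\top dH)$ piece is genuinely common to both decompositions and therefore cancels cleanly; this relies on the uniqueness of the positive definite square root, which forces $W$ and $V$ to be simultaneously diagonalized by the same orthogonal matrix (up to the measure-zero event of repeated eigenvalues, which can be excluded since $W$ has a density). Once this is observed, everything reduces to the Vandermonde factorization $\delta_i^2-\delta_j^2=(\delta_i-\delta_j)(\delta_i+\delta_j)$ and a one-dimensional change of variables $\lambda_i=\delta_i^2$.
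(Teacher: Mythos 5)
Your proposal is correct and follows essentially the same route as the paper: both apply \autoref{jacobp} to $W$ and to $V$ through their common spectral decomposition $W=H\Lambda H^\top$, $V=H\Delta H^\top$, insert the diagonal change of variables $\lambda_i=\delta_i^2$ contributing the factor $2^p\prod_i\delta_i$, and cancel the Vandermonde $\prod_{i<j}(\delta_i-\delta_j)$ via $\delta_i^2-\delta_j^2=(\delta_i-\delta_j)(\delta_i+\delta_j)$. You phrase this as a ratio of exterior differential forms while the paper composes three explicit Jacobians, but the calculation and the cancellations (including the shared $(H^\top dH)$ factor and $c_p$) are the same.
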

\begin{proof}
   Suppose that $W$ admits the spectral decomposition
$W = H \Lambda H^\top$, 
where $H \in \mathbb{R}^{p \times p}$ is an orthogonal matrix satisfying $H^\top H = HH^\top = I_p$, and $\Lambda = \text{diag}(\lambda_1, \ldots, \lambda_p)$ with ordered eigenvalues $\lambda_1\ge \cdots \ge\lambda_p > 0$.   According to \autoref{jacobp}, we have
\begin{equation}\label{eq1}
    J(W\to(\Lambda,H))=c_p\prod_{i<j}(\lambda_i-\lambda_j),\quad c_p:=\frac{2^p\pi^{p^2/2}}{\Gamma_p(p/2)}.
\end{equation}
Let $\Delta:=\text{diag}(\delta_1,\delta_2,\ldots,\delta_p)=\Lambda^{\frac{1}{2}}=\text{diag}(\sqrt{\lambda_1},\sqrt{\lambda_2},\ldots,\sqrt{\lambda_p})$, It follows that
\begin{align}\label{eq2}J(\Lambda\to\Delta)&=\prod_{j=1}^p \left(\frac{d(\delta_j^2)}{d\delta_j}\right)=2^{p}\prod_{j=1}^{p}\delta_{j}.
\end{align}
From the definition of the matrix square root, it follows that $V=H\Lambda^\frac{1}{2}H^\top=H\Delta H^\top$. By applying \autoref{jacobp} once more, we obtain
\begin{align}\label{eq3}
    J((H,\Delta)\to V)&=\frac{1}{J(V\to(H,\Delta)) }
=\frac{1}{c_p\prod_{i<j}(\delta_i-\delta_j)}.
\end{align}
 Combining \eqref{eq1}, \eqref{eq2} and \eqref{eq3}, 
\begin{align*}J(W\to V)&=J(W\to(H,\Lambda))\times J(\Lambda\to\Delta)\times J((H,\Delta)\to V)\nonumber\\&= c_{p}\prod_{i<j}(\delta_{i}^{2}-\delta_{j}^{2})\times(2^{p}\prod_{i=1}^{p}\delta_{i})\times\frac{1}{c_{p}\prod_{i<j}(\delta_{i}-\delta_{j})}\nonumber\\
&=\prod_{i\leq j}(\delta_{i}+\delta_{j}).
\end{align*}
We complete the proof.
\end{proof}
The following theorem presents the main result of this section.
\begin{theorem}\label{density W1/2}
   Suppose that $W \sim W_p(m, \Sigma)$, and let  $V = W^{1/2}$. Then the probability density function of $V$ is given by the following expression,
$$
f(V) = \frac{2^p}{2^{mp/2}\Gamma_p(m/2)|\Sigma|^{m/2}}|V|^{m-p} \exp \left( -\frac{1}{2} \operatorname{tr}(\Sigma^{-1}V^2) \right) \prod_{i < j} (\delta_i + \delta_j),$$
where  $\delta_1, \ldots, \delta_p$ are the eigenvalues of $V$.
\end{theorem}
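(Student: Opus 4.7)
The plan is a direct application of the change-of-variables formula, since the map $W \mapsto V := W^{1/2}$ is a smooth bijection on the cone of $p \times p$ symmetric positive definite matrices. Writing $f_W$ for the density supplied by \autoref{density} and $f_V$ for the sought density, one has
$$f_V(V) = f_W(V^2) \cdot J(W \to V),$$
and both ingredients on the right are already essentially in hand — the Wishart density from \autoref{density} and the Jacobian from \autoref{jacob}. So the proof reduces to a clean substitution and some bookkeeping of exponents.

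First, I would substitute $W = V^2$ into the Wishart density. The two non-constant factors transform as follows: $|W|^{(m-p-1)/2} = |V|^{m-p-1}$, because $|W| = |V|^2$; and the trace in the exponential simply becomes $\operatorname{tr}(\Sigma^{-1} V^2)$. The normalizing constant $2^{mp/2} |\Sigma|^{m/2} \Gamma_p(m/2)$ is carried over unchanged, since it does not depend on $W$.

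Second, I would multiply by $J(W \to V) = 2^p \prod_{i=1}^p \delta_i \prod_{i<j}(\delta_i + \delta_j)$, as provided by \autoref{jacob}. Since $\delta_1, \ldots, \delta_p$ are the eigenvalues of $V$, we have $\prod_{i=1}^p \delta_i = |V|$, and this factor combines with $|V|^{m-p-1}$ to give $|V|^{m-p}$, exactly the exponent appearing in the theorem. The remaining pieces $2^p$ and $\prod_{i<j}(\delta_i + \delta_j)$ slot in with the same signs and powers as in the claimed formula, completing the derivation.

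Because the substantive work — the absolutely-continuous density of the Wishart law and the determinant of the symmetric-square-root transformation — has already been isolated into \autoref{density} and \autoref{jacob}, there is no genuine obstacle here; the argument is essentially accounting. The only step worth verifying carefully is the exponent on $|V|$: the $(m-p-1)$ inherited from the Wishart density plus the single extra $|V|$ coming from $\prod_i \delta_i$ in the Jacobian must combine to $m-p$, which is precisely what appears in the statement.
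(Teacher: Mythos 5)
Your proposal is correct and matches the paper's proof essentially step for step: both apply the change-of-variables formula $f_V(V) = f_W(V^2)\,J(W\to V)$, invoke \autoref{density} and \autoref{jacob}, and combine $|V|^{m-p-1}$ with the factor $|V| = \prod_i \delta_i$ from the Jacobian to obtain $|V|^{m-p}$. Nothing substantive differs.
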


\begin{proof}
    Let the density function of $W\sim W_p(m,\Sigma)$ be denoted by $p(W)$. From \autoref{density}, we have 
    \begin{equation*}
        p(W)=\frac{1}{2^{mp/2}|\Sigma|^{m/2}\Gamma_p\left(\frac{m}{2}\right)}|W|^{(m-p-1)/2}\exp\left(-\frac{1}{2}\operatorname{tr}(\Sigma^{-1}W)\right),\quad \forall  \  W\in\mathbb{R}^{p\times p},W>0.
    \end{equation*}
    Since $V=W^{\frac{1}{2}}$, the density function of $V$ can be expressed as
    \begin{equation*}
        f(V)=p(V^2)J(W\to V).
    \end{equation*}
  By  applying \autoref{jacob}, we obtain
    \begin{equation*}
        J(W\to V)=\prod_{1\leq i\leq j\leq p}(\delta_{i}+\delta_{j})=2^{p}\prod_{i=1}^{p}\delta_{i}\prod_{i<j}(\delta_{i}+\delta_{j})=2^{p}|V|\prod_{i<j}(\delta_{i}+\delta_{j}).
    \end{equation*}
    Therefore,
    \begin{equation*}
         \begin{aligned}
    f(V)=\frac{1}{2^{mp/2}\Gamma_{p}(m/2)|\Sigma|^{m/2}}|V|^{m-p-1}\exp\left(-\frac{1}{2}tr(\Sigma^{-1}V^{2})\right)\prod_{i\leq j}(\delta_{i}+\delta_{j})\\
    =\frac{2^p}{2^{mp/2}\Gamma_p(m/2)|\Sigma|^{m/2}}|V|^{m-p}\exp\left(-\frac{1}{2}tr(\Sigma^{-1}V^{2})\right)\prod_{i<j}(\delta_{i}+\delta_{j}),
    \end{aligned}
    \end{equation*}
   and we complete the proof.
\end{proof}

\begin{remark}
    From \autoref{density W1/2}, we further know that $V=(W_p(m))^\frac{1}{2}$ has the following density function,
    \begin{equation*}
        \frac{2^p}{2^{mp/2}\Gamma_p(m/2)}|V|^{m-p} \exp \left( -\frac{1}{2} \operatorname{tr}(V^2) \right) \prod_{i < j} (\delta_i + \delta_j),
    \end{equation*}
    where  $\delta_1, \ldots, \delta_p$ denote the eigenvalues of $V$. However, it is challenging to derive the convergence law of 
$V$ directly from this exact density function.
\end{remark}

\section{Asymptotic distribution of $(W_p(m))^{\frac{1}{2}}$}
Although  the explicit form of the density function of $(W_p(m))^\frac{1}{2}$ is available, the distributional behavior of  $(W_p(m))^\frac{1}{2}$ remains largely unexplored. Accordingly, in this section, we investigate 
the asymptotic distribution of the square root of a Wishart matrix $W\sim W_p(m)$ .  

We begin by introducing some additional notation. For two vectors    $\boldsymbol{x}=(x_1,\ldots,x_k)^\top$ and $\boldsymbol{y}=(y_1,\ldots,y_k)^\top$ in $\mathbb{R}^k$, we write $\boldsymbol{x}\le\boldsymbol{y}$  if $x_j\le y_j,\ \forall 1\le j\le k$. Any $\mathbb{R}^k$ valued random vector $\boldsymbol{X}$ is then associated with the distribution function $F_{\boldsymbol{X}}(\boldsymbol{x})=P(\boldsymbol{X}\le \boldsymbol{x} )$. The notions of convergence in distribution and convergence in probability for sequences of random vectors are defined in the following way.

\begin{definition}
    We say that $\boldsymbol{X}_n$ converges in distribution to $\boldsymbol{X}$,denoted as $\boldsymbol{X}_n\xrightarrow{d}\boldsymbol{X}$, if $F_{\boldsymbol{X}_n}(\boldsymbol{x})\to F_{\boldsymbol{X}}(\boldsymbol{x})$ at all continuity points of $F_{\boldsymbol{X}}$ as $n\to \infty$.
\end{definition}
\begin{definition}
     We say that $\boldsymbol{X}_n$ converges in probability to $\boldsymbol{X}$, denoted as $\boldsymbol{X}_n\xrightarrow{P} \boldsymbol{X}$, if for every $\epsilon>0$, 
     $P(\|\boldsymbol{X}_n-\boldsymbol{X}\|>\epsilon)\to 0$, as $\ n\to \infty$.
\end{definition}
Next, we present 
our main result on the asymptotic distribution of $(W_p(m))^\frac{1}{2}$.

\begin{theorem}\label{thmI}
    Fix $ p\in \mathbb{N}^+$ and assume that $W\sim W_p(m, I_p)$, $m\ge p$, and
    \begin{equation*}
        V=W^{\frac 12}=\begin{pmatrix}
            v_{11 }&v_{12}&\cdots&v_{1p}\\
            v_{21 }&v_{22}&\cdots&v_{2p}\\
            \vdots& \vdots& \ddots&\vdots\\
            v_{p1}&v_{p2}&\cdots &v_{pp}
        \end{pmatrix}.
          \end{equation*}
    Then, as $m\to \infty$, the following result holds
    \begin{equation}\label{conres1}
        (v_{11}-\sqrt{m},...,v_{pp}-\sqrt{m},v_{12},...,v_{p-1,p})^{\top} \xrightarrow{d}          \    \mathcal{N}_{p(p+1)/2}\left(\mathbf{0},\left(\begin{array}{cc}\frac{1}{2}I_{p}&0\\0&\frac{1}{4}I_{p(p-1)/2}\end{array}\right)\right).
    \end{equation}
\end{theorem}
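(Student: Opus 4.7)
The plan is to linearize the matrix square root around $\sqrt{m}\,I_p$ and to read off the joint asymptotic law from the Bartlett decomposition. Write $W=LL^\top$ with $L$ lower triangular, independent entries $L_{ii}^{2}\sim\chi_{m-i+1}^{2}$ and $L_{ij}\sim\mathcal{N}(0,1)$ for $i>j$. Setting $L_0:=L-\sqrt{m}\,I_p$, the delta method applied to $\sqrt{\chi^{2}_{m-i+1}}$ gives $L_{ii}-\sqrt{m}\xrightarrow{d}\mathcal{N}(0,1/2)$ (the shift from $\sqrt{m-i+1}$ to $\sqrt{m}$ is $O(m^{-1/2})$ and hence negligible), while for $j>i$ we have $L_{ji}\sim\mathcal{N}(0,1)$ outright; together with the mutual independence of the Bartlett entries this yields the joint weak limit
\[
  \bigl(L_{11}-\sqrt{m},\dots,L_{pp}-\sqrt{m},\;\tfrac{L_{21}}{2},\dots,\tfrac{L_{p,p-1}}{2}\bigr)^{\top}
  \xrightarrow{d}\mathcal{N}_{p(p+1)/2}\!\Bigl(\mathbf{0},\operatorname{diag}\bigl(\tfrac12 I_p,\tfrac14 I_{p(p-1)/2}\bigr)\Bigr),
\]
which is precisely the target Gaussian appearing on the right-hand side of \eqref{conres1}.

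Next, set $R:=V-\sqrt{m}\,I_p$. Squaring $V=\sqrt{m}\,I_p+R$ and using symmetry of $R$ gives the identity
\[
  R\;=\;\frac{W-mI_p}{2\sqrt{m}}-\frac{R^{2}}{2\sqrt{m}}
  \;=\;\frac{L_0+L_0^{\top}}{2}+\frac{L_0L_0^{\top}}{2\sqrt{m}}-\frac{R^{2}}{2\sqrt{m}}.
\]
The diagonal entries of $(L_0+L_0^\top)/2$ are $L_{ii}-\sqrt{m}$ and its strictly-upper-triangular $(i,j)$ entries are $L_{ji}/2$, so the first step exactly matches the candidate limit with the upper-triangular entries of $R$. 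It then suffices to show that the two remainder terms on the right are entrywise $o_p(1)$, after which Slutsky's theorem and the continuous mapping theorem deliver the desired joint weak convergence.

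To control the remainders I would bypass the self-referential equation for $R$ and use the operator-calculus expansion $V=\sqrt{m}\,(I_p+A/\sqrt{m})^{1/2}$ with $A:=(W-mI_p)/\sqrt{m}$. Since $p$ is fixed and each entry of $A$ is tight, $\|A\|=O_p(1)$, so the event $\{\|A\|<\sqrt{m}\}$ has probability tending to one, and on that event the convergent series $(I_p+M)^{1/2}=I_p+M/2-M^{2}/8+\cdots$ yields
\[
  V-\sqrt{m}\,I_p\;=\;\tfrac12 A-\tfrac{1}{8\sqrt{m}}A^{2}+O_p\!\bigl(m^{-1}\|A\|^{3}\bigr),
\]
so $R=A/2+o_p(1)$. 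The main obstacle throughout is precisely the a priori control of this nonlinear correction, since the implicit equation $R=\tfrac12 A-R^{2}/(2\sqrt{m})$ is self-referential; bypassing it through the operator-norm series expansion of the square root reduces tightness of $R$ directly to that of $A$, which is furnished by the Bartlett-based calculation in the first step.
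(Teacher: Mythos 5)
Your proposal is correct and follows essentially the same route as the paper: both rest on the Bartlett decomposition together with the matrix power-series expansion of $(I+M)^{1/2}$ around $\sqrt{m}\,I_p$, reducing the upper-triangular part of $V-\sqrt{m}\,I_p$ to that of $(L_0+L_0^\top)/2$ plus an $o_p(1)$ remainder, and then reading off the joint Gaussian limit from the independent Bartlett entries via Slutsky. The paper's auxiliary lemma giving $X^{1/2}=I+\tfrac12(X-I)+R_X$ with $\|R_X\|\le 2\|X-I\|^2$ is exactly the quantitative form of the operator-series step you invoke; the paper keeps it explicit (with indicator-function bookkeeping and a Chebyshev bound on $\|T_m\|$) because those same bounds are reused to obtain the Wasserstein convergence rate in the following section.
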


\begin{remark}
A $p\times p$ symmetric matrix $G$ is called a Gaussian Wigner matrix if its diagonal entries are i.i.d $N(0,2)$  random variables and its  upper-triangle entries are i.i.d $N(0,1)$ random variables that are independent with the diagonals.  \autoref{thmI} implies 
$$
W^{\frac 12} -\sqrt{m}I_p  \xrightarrow{d} \frac 12 G.
$$
\end{remark}

\begin{remark}
     Since $\sqrt{m-\frac{p}{4}}-\sqrt{m}\rightarrow 0$, it follows that 
 \begin{equation}\label{conres2}
W^{\frac 12} -\sqrt{m-\frac{p}{4}}I_p  \xrightarrow{d} \frac 12 G,
 \end{equation}
 as a consequence of \autoref{thmI} and \autoref{Slusky}.
Simulation results indicate that \eqref{conres2} provides a numerically closer approximation to the normal distribution.
 \end{remark}

\autoref{thmI} can be established by applying the delta method, based on the known asymptotic distribution of the Wishart matrix (\cite{Muirhead}). In this paper, however, we adopt an alternative approach that more readily facilitates the derivation of convergence rates. Specifically, we employ the Bartlett decomposition of the Wishart matrix (\cite{And}) together with the Slutsky lemma in high-dimensional settings (\cite{Van}), as demonstrated in the following two propositions.

\begin{proposition}\label{Slusky}
    Let $k\in \mathbb{N}^+$. If 
    \begin{equation}\label{cononX}
        \boldsymbol{X}_n  \xrightarrow{d} \boldsymbol{X}\in \mathbb{R}^k,\ n\to\infty,
    \end{equation}
    and 
    \begin{equation}\label{cononY}
        \boldsymbol{Y}_n\xrightarrow{P} \boldsymbol{c}:=(c_1,\ c_2,\ldots,\ c_k)^\top\in \mathbb{R}^k,
    \end{equation}
    where $\boldsymbol{c}$ is a constant vector, then,
    \begin{equation*}
       \boldsymbol{X}_n+  \boldsymbol{Y}_n\xrightarrow{d} \boldsymbol{X}+ \boldsymbol{c}.
    \end{equation*}
\end{proposition}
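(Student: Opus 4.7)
My plan is to establish this multivariate Slutsky lemma through characteristic functions, exploiting the multivariate L\'evy continuity theorem to reduce distributional convergence to pointwise convergence of characteristic functions. I prefer this over a direct distribution-function sandwich, since the latter requires a delicate selection of continuity points in $\mathbb{R}^k$, whereas the characteristic function of $\boldsymbol{X}+\boldsymbol{c}$ is automatically continuous everywhere.

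First, I would fix an arbitrary $\boldsymbol{t} \in \mathbb{R}^k$ and write $\phi_n(\boldsymbol{t}) := \mathbb{E}[e^{i\boldsymbol{t}^\top(\boldsymbol{X}_n+\boldsymbol{Y}_n)}]$. The central algebraic identity is the decomposition
\[
\phi_n(\boldsymbol{t}) \;=\; e^{i\boldsymbol{t}^\top \boldsymbol{c}}\,\mathbb{E}\!\left[e^{i\boldsymbol{t}^\top \boldsymbol{X}_n}\right] \;+\; \mathbb{E}\!\left[e^{i\boldsymbol{t}^\top \boldsymbol{X}_n}\bigl(e^{i\boldsymbol{t}^\top \boldsymbol{Y}_n} - e^{i\boldsymbol{t}^\top \boldsymbol{c}}\bigr)\right].
\]
Hypothesis \eqref{cononX} together with the easy direction of L\'evy's theorem gives $\mathbb{E}[e^{i\boldsymbol{t}^\top \boldsymbol{X}_n}] \to \mathbb{E}[e^{i\boldsymbol{t}^\top \boldsymbol{X}}]$, so the first term converges to $e^{i\boldsymbol{t}^\top \boldsymbol{c}}\mathbb{E}[e^{i\boldsymbol{t}^\top \boldsymbol{X}}]$.

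For the second term, I would use $|e^{i\boldsymbol{t}^\top \boldsymbol{X}_n}|=1$ to bound its modulus by $\mathbb{E}[|e^{i\boldsymbol{t}^\top \boldsymbol{Y}_n} - e^{i\boldsymbol{t}^\top \boldsymbol{c}}|]$. Applying the continuous mapping theorem to the continuous function $\boldsymbol{y} \mapsto e^{i\boldsymbol{t}^\top \boldsymbol{y}}$ converts \eqref{cononY} into $e^{i\boldsymbol{t}^\top \boldsymbol{Y}_n} \xrightarrow{P} e^{i\boldsymbol{t}^\top \boldsymbol{c}}$. Since the sequence is uniformly bounded in modulus by $2$, the bounded convergence theorem (for convergence in probability) yields $\mathbb{E}[|e^{i\boldsymbol{t}^\top \boldsymbol{Y}_n} - e^{i\boldsymbol{t}^\top \boldsymbol{c}}|] \to 0$, so the second term vanishes in the limit.

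Assembling the two pieces gives $\phi_n(\boldsymbol{t}) \to e^{i\boldsymbol{t}^\top \boldsymbol{c}}\mathbb{E}[e^{i\boldsymbol{t}^\top \boldsymbol{X}}] = \mathbb{E}[e^{i\boldsymbol{t}^\top(\boldsymbol{X}+\boldsymbol{c})}]$ for every $\boldsymbol{t}\in\mathbb{R}^k$, and a final appeal to the sufficiency direction of the multivariate L\'evy continuity theorem delivers $\boldsymbol{X}_n+\boldsymbol{Y}_n \xrightarrow{d} \boldsymbol{X}+\boldsymbol{c}$. The main obstacle is the bounded-convergence step: one must upgrade convergence in probability of the complex-valued sequence $e^{i\boldsymbol{t}^\top \boldsymbol{Y}_n}$ to $L^1$ convergence. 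This is precisely where the uniform bound $|e^{i\boldsymbol{t}^\top \boldsymbol{Y}_n}|=1$ becomes essential; without such boundedness (e.g.\ if one tried to work with $\boldsymbol{Y}_n$ directly rather than with its characteristic exponential), no moment hypothesis on $\boldsymbol{Y}_n$ is available in the statement.
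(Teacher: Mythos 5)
Your proof is correct. Note, however, that the paper does not actually prove this proposition; it simply cites Van der Vaart (2000, Theorem on p.\ 11), where the standard route is: (i) $\boldsymbol{Y}_n\xrightarrow{P}\boldsymbol{c}$ implies $\boldsymbol{Y}_n\xrightarrow{d}\boldsymbol{c}$; (ii) marginal convergence together with convergence to a constant gives \emph{joint} convergence $(\boldsymbol{X}_n,\boldsymbol{Y}_n)\xrightarrow{d}(\boldsymbol{X},\boldsymbol{c})$; (iii) the continuous mapping theorem applied to the addition map $(\boldsymbol{x},\boldsymbol{y})\mapsto\boldsymbol{x}+\boldsymbol{y}$ finishes the job. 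Your characteristic-function argument is a genuinely different and equally valid path: the decomposition
\[
\mathbb{E}\bigl[e^{i\boldsymbol{t}^\top(\boldsymbol{X}_n+\boldsymbol{Y}_n)}\bigr]
= e^{i\boldsymbol{t}^\top\boldsymbol{c}}\,\mathbb{E}\bigl[e^{i\boldsymbol{t}^\top\boldsymbol{X}_n}\bigr]
+ \mathbb{E}\Bigl[e^{i\boldsymbol{t}^\top\boldsymbol{X}_n}\bigl(e^{i\boldsymbol{t}^\top\boldsymbol{Y}_n}-e^{i\boldsymbol{t}^\top\boldsymbol{c}}\bigr)\Bigr]
\]
is exact, the first term converges by the definitional (``easy'') direction of L\'evy continuity, and the second term is controlled by $\mathbb{E}\bigl[\lvert e^{i\boldsymbol{t}^\top\boldsymbol{Y}_n}-e^{i\boldsymbol{t}^\top\boldsymbol{c}}\rvert\bigr]\to 0$, which follows because a uniformly bounded sequence converging to $0$ in probability converges to $0$ in $L^1$. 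Reassembling and invoking the sufficiency direction of L\'evy's theorem gives the result. The trade-off: your route avoids the joint-convergence lemma entirely and needs only L\'evy continuity, while the textbook route avoids characteristic functions and generalizes more readily to metric-space-valued sequences (where characteristic functions are unavailable). Both are standard and rigorous; yours is self-contained and well suited to the Euclidean setting of the paper.
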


\begin{proposition}\label{npro}\upshape\textbf{(Bartlett decomposition)}
   \textit{ If the matrix $L_m$ satisfies that}
  \begin{equation}\label{n}
      L_m=\begin{pmatrix}
          c_1&0&0&\ldots&0\\
          n_{21}&c_2&0&\ldots&0\\
          n_{31}&n_{32}&c_3&\ldots&0\\
          \vdots&\vdots& \vdots&\ddots&\vdots\\
          n_{p1}&n_{p2}&n_{p3}&\ldots&c_{p}
      \end{pmatrix},
  \end{equation}
 \textit{ where $c_j^2\sim \chi^2_{m-j+1},\ j=1,2,\ldots,p$, and $n_{ij}\sim \mathcal{N}(0,1), \ 1\le j<i\le p$ are mutually independent, then} \begin{equation*}
      L_mL_m^\top\sim W_p(m).
  \end{equation*}
\end{proposition}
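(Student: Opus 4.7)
The plan is to prove the Bartlett decomposition by realizing $W_p(m)$ directly as $XX^\top$ for a $p \times m$ matrix $X$ of i.i.d.\ standard normals, and then Cholesky-factorizing this product to extract a lower-triangular matrix whose entries match those in \eqref{n}. Concretely, I would let $X$ be a $p \times m$ matrix with entries drawn independently from $N(0,1)$; viewing the rows of $X$ as i.i.d.\ $N_m(\mathbf{0}, I_m)$ vectors, the definition of the Wishart distribution gives $XX^\top \sim W_p(m)$. The goal is then to produce, from $X$, a lower-triangular matrix $\tilde L$ with positive diagonal such that (i) $\tilde L \tilde L^\top = XX^\top$ and (ii) $\tilde L$ has exactly the joint distribution described in \eqref{n} for $L_m$. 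This distributional identity yields $L_m L_m^\top \stackrel{d}{=} \tilde L \tilde L^\top = XX^\top \sim W_p(m)$, as required.

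To construct $\tilde L$, I would perform row-wise Gram--Schmidt on $X$. Denote by $R_1, \ldots, R_p \in \mathbb{R}^m$ the rows of $X$; since $m \ge p$, they are almost surely linearly independent. Set $\tilde c_1 = \|R_1\|$ and $e_1 = R_1 / \tilde c_1$, and recursively, for $j = 2, \ldots, p$, set $\tilde n_{ji} = \langle R_j, e_i \rangle$ for $i < j$ and $\tilde c_j = \| R_j - \sum_{i<j} \tilde n_{ji} e_i \|$, so that $R_j = \sum_{i<j} \tilde n_{ji} e_i + \tilde c_j e_j$ with $e_1, \ldots, e_p$ orthonormal. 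Arranging the coefficients into a lower triangular matrix $\tilde L$ (with diagonal entries $\tilde c_j$ and sub-diagonal entries $\tilde n_{ji}$), orthonormality of the $e_i$ yields $R_j \cdot R_k = \sum_{i \le \min(j,k)} \tilde L_{ji} \tilde L_{ki}$, so $XX^\top = \tilde L \tilde L^\top$.

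The main step, and the principal obstacle, is verifying that the joint distribution of the entries of $\tilde L$ coincides with that prescribed for $L_m$. I would argue by induction on $j$: conditional on $R_1, \ldots, R_{j-1}$ (equivalently, on $e_1, \ldots, e_{j-1}$), rotational invariance of the standard Gaussian law on $\mathbb{R}^m$ implies that the coordinates of $R_j$ with respect to any orthonormal basis completing $\{e_1, \ldots, e_{j-1}\}$ are i.i.d.\ $N(0,1)$, and are independent of $R_1, \ldots, R_{j-1}$. The first $j-1$ of these coordinates are exactly the $\tilde n_{ji}$, while the remaining $m - j + 1$ coordinates form a standard Gaussian vector in the orthogonal complement whose squared norm equals $\tilde c_j^{\,2}$. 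Hence, conditionally (and thus unconditionally), $\tilde n_{j1}, \ldots, \tilde n_{j,j-1}$ are i.i.d.\ $N(0,1)$, $\tilde c_j^{\,2} \sim \chi^2_{m-j+1}$, and the pair $(\tilde n_{j\cdot}, \tilde c_j)$ is independent of the entries produced at earlier stages. Iterating across $j$ gives the full joint independence structure in \eqref{n}.

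The only remaining subtlety I would flag is the almost-sure non-degeneracy of the Gram--Schmidt procedure — i.e.\ that the denominators $\tilde c_j$ are positive almost surely — which is guaranteed by the assumption $m \ge p$ and the fact that a $p \times m$ matrix of i.i.d.\ standard normals has full row rank a.s. Combining all the pieces yields $\tilde L \stackrel{d}{=} L_m$, and hence $L_m L_m^\top \stackrel{d}{=} XX^\top \sim W_p(m)$.
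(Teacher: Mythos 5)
The paper does not supply its own proof of this proposition; it cites the Bartlett decomposition as a standard result from Anderson (2003) and Muirhead (2005) and moves on. Your Gram--Schmidt argument is the classical proof found in those references, and it is correct: write $W = XX^\top$ for a $p\times m$ matrix $X$ of i.i.d.\ standard normals, Gram--Schmidt the rows to produce the Cholesky factor, and use rotational invariance of the conditional law of $R_j$ given $R_1,\dots,R_{j-1}$ to identify the $\chi^2_{m-j+1}$ diagonal, the $\mathcal{N}(0,1)$ off-diagonal entries, and the cross-stage independence. Two small points worth tightening if you were to write this out in full: (i) the reason $XX^\top\sim W_p(m)$ is that the \emph{columns} of $X$ are i.i.d.\ $\mathcal{N}_p(\mathbf{0},I_p)$ and the Wishart in Definition 1.1 sums $m$ such outer products, so your passing remark about the rows being $\mathcal{N}_m(\mathbf{0},I_m)$ is a slight misdirection even though the identity is of course the same; and (ii) the conditional-independence step deserves one sentence making explicit that an orthonormal completion $Q=Q(R_1,\dots,R_{j-1})$ of $\{e_1,\dots,e_{j-1}\}$ can be chosen measurably, so that $Q^\top R_j$ is $\mathcal{N}_m(\mathbf{0},I_m)$ conditionally and therefore independent of $(R_1,\dots,R_{j-1})$ unconditionally, from which the joint independence across $j$ follows by the tower property. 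With those cosmetic repairs the argument is a complete and self-contained proof of the proposition the paper only cites.
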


Therefore, the distribution of the square root of  $W_p(m)$ can be equivalently characterized by that of $(L_mL_m^\top)^\frac{1}{2}$. 
To analyze this, we apply the Taylor expansion method to the square root of a positive definite matrix. Before turning to the matrix case, we first consider the one-dimensional case, which leads to the following lemma.

\begin{lemma}\label{lsqrt}
    For any $n\ge2$, we have
    \begin{equation*}
        \sum_{k=0}^{n} \binom{\frac{1}{2}}{k}\binom{\frac{1}{2}}{n-k}=0,
    \end{equation*}
    where
    \begin{equation*}
         \binom{\frac{1}{2}}{k}:=\frac{\frac{1}{2}(\frac{1}{2}-1)\cdots(\frac{1}{2}-k+1)}{k!},\quad \forall k\ge 1;\quad\binom{\frac{1}{2}}{0}:=1.
    \end{equation*}
\end{lemma}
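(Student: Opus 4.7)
The plan is to view this identity as an instance of the Vandermonde--Chu convolution for generalized binomial coefficients, and to establish it by comparing coefficients in the binomial series expansion of $(1+x)^{1/2}$. The crucial algebraic fact is simply that $(1+x)^{1/2}\cdot(1+x)^{1/2}=1+x$, which on the right-hand side is a polynomial with no terms of degree $\ge 2$.

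First, I would invoke the binomial series
\begin{equation*}
(1+x)^{1/2}=\sum_{k=0}^{\infty}\binom{1/2}{k}x^{k},
\end{equation*}
which converges absolutely for $|x|<1$. Next, since absolutely convergent power series may be multiplied term by term via the Cauchy product, squaring both sides yields
\begin{equation*}
1+x=(1+x)^{1/2}\cdot(1+x)^{1/2}=\sum_{n=0}^{\infty}\left(\sum_{k=0}^{n}\binom{1/2}{k}\binom{1/2}{n-k}\right)x^{n}.
\end{equation*}
Matching coefficients of $x^{n}$ on both sides, the constant term gives $\binom{1/2}{0}^{2}=1$, the linear coefficient gives $2\binom{1/2}{0}\binom{1/2}{1}=1$, and for every $n\ge 2$ the left-hand coefficient vanishes, yielding the claim
\begin{equation*}
\sum_{k=0}^{n}\binom{1/2}{k}\binom{1/2}{n-k}=0, \qquad n\ge 2.
\end{equation*}

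There is essentially no obstacle here beyond invoking the standard Cauchy product for absolutely convergent power series; alternatively, one could quote the generalized Vandermonde identity $\sum_{k=0}^{n}\binom{a}{k}\binom{b}{n-k}=\binom{a+b}{n}$ with $a=b=1/2$ and observe that $\binom{1}{n}=0$ for $n\ge 2$. Either route produces a one-line verification, which is all the later Taylor-expansion argument for the matrix square root will require.
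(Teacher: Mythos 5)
Your proof is correct and takes essentially the same approach as the paper: both square a power-series representation of $(1+x)^{1/2}$ and compare coefficients of $x^n$. The only minor technical difference is that you work with the full binomial series and the Cauchy product, whereas the paper uses a finite Taylor expansion with a Peano remainder and appeals to uniqueness of the expansion; your parenthetical appeal to the generalized Vandermonde identity is an even shorter alternative.
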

\begin{proof}[Proof. ]
    Applying the Taylor expansion with the Peano form of the remainder yields 
    \begin{equation*}
        (1+x)^\frac{1}{2}=\sum_{k=0}^n\binom{\frac{1}{2}}{k}x^k+o(x^n),\quad (x\to 0).
    \end{equation*}
   Since
    \begin{equation*}
        1+x=\left(\sum_{k=0}^n\binom{\frac{1}{2}}{k}x^k+o(x^n)\right)^2,\quad (x\to 0),
    \end{equation*}
and by the uniqueness of power series expansions, we get
    \begin{equation*}
        \sum_{k=0}^{s} \binom{\frac{1}{2}}{k}\binom{\frac{1}{2}}{s-k}=0,\quad s=2,3,\ldots,n.
    \end{equation*}
    The proof is complete.
\end{proof}

Now, we extend \autoref{lsqrt} to matrix case.

\begin{lemma}\label{psqrt}
    Let $X$ be a $p\times p$ symmetric positive definite matrix. Suppose that $ \|X-I\|=\epsilon\le \frac{1}{2}$, then
    \begin{equation*}
        X^\frac{1}{2}=I+ \frac{1}{2}\left(X-I\right)+R_{X},
    \end{equation*}
    where $R_X$ is a symmetric matrix satisfying
    \begin{equation*}
        \|R_X\|\le 2\epsilon^2.
    \end{equation*}
\end{lemma}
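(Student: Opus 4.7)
The strategy is to reduce the matrix statement to a scalar Taylor expansion via the spectral decomposition, then use the standard inequality $\|\cdot\|_{\mathrm{op}}\le \|\cdot\|$ to control the eigenvalues uniformly. I write $X = QDQ^\top$ with $Q$ orthogonal and $D = \diag(\lambda_1,\ldots,\lambda_p)$. The operator-norm bound gives $\max_i|\lambda_i - 1|\le \|X-I\|=\epsilon\le 1/2$, so every $\lambda_i$ lies in $[1/2, 3/2]$, and in particular the scalar binomial series $\sqrt{\lambda_i} = \sum_{k\ge 0}\binom{1/2}{k}(\lambda_i - 1)^k$ converges absolutely.

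Writing the scalar expansion as $\sqrt{\lambda_i} = 1 + \tfrac{1}{2}(\lambda_i - 1) + r_i$ with $r_i := \sum_{k\ge 2}\binom{1/2}{k}(\lambda_i - 1)^k$ and re-assembling through $Q$ yields the matrix identity $X^{1/2} = I + \tfrac{1}{2}(X - I) + R_X$, where $R_X := Q\,\diag(r_1,\ldots,r_p)\,Q^\top$. Being an orthogonal conjugation of a diagonal matrix, $R_X$ is symmetric.

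For the Frobenius bound I use two ingredients: $|\binom{1/2}{k}|\le 1$ for every $k\ge 0$ (an easy induction, since $|\binom{1/2}{k}|/|\binom{1/2}{k-1}| = (k-3/2)/k < 1$ for $k\ge 2$, and the ratio gives $1/2$ at $k=1$), and $|\lambda_i - 1|\le 1/2$. Combining these and summing a geometric series,
\[
|r_i| \le \sum_{k\ge 2}|\lambda_i - 1|^{k} = \frac{(\lambda_i - 1)^2}{1 - |\lambda_i - 1|} \le 2(\lambda_i - 1)^2.
\]
Orthogonal invariance of the Frobenius norm, together with $\sum_i(\lambda_i-1)^2 = \|X-I\|^2 = \epsilon^2$ and $\max_i(\lambda_i-1)^2 \le \epsilon^2$, then gives
\[
\|R_X\|^2 = \sum_i r_i^2 \le 4\sum_i(\lambda_i-1)^4 \le 4\,\epsilon^2\sum_i(\lambda_i-1)^2 \le 4\epsilon^4,
\]
so $\|R_X\|\le 2\epsilon^2$.

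The only slightly delicate step is the tail estimate for $r_i$; everything else is routine linear algebra. An alternative route would be to work directly with the matrix series $\sum_{k\ge 0}\binom{1/2}{k}(X-I)^k$ and use \autoref{lsqrt} to identify its square with $X$, but the eigenvalue approach is cleaner and sidesteps justifying absolute convergence and the Cauchy product for matrix series.
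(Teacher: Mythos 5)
Your proof is correct, but it takes a genuinely different route from the paper's. The paper works directly with the matrix power series $f_N(X) = \sum_{k=0}^N \binom{1/2}{k}(X-I)^k$: it first shows Cauchy convergence in Frobenius norm, then invokes its \autoref{lsqrt} (a combinatorial identity for the coefficients $\binom{1/2}{k}$, proved via the scalar expansion of $(1+x)^{1/2}$) and a Cauchy-product argument to verify $f_\infty(X)^2 = X$, and separately checks positive definiteness to conclude $f_\infty(X) = X^{1/2}$. You instead diagonalize $X = QDQ^\top$, note that $\max_i|\lambda_i-1| \le \|X-I\|_{\mathrm{op}} \le \|X-I\| = \epsilon$, apply the scalar binomial series eigenvalue by eigenvalue, and reassemble $R_X = Q\,\diag(r_1,\ldots,r_p)\,Q^\top$. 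This bypasses the Cauchy product and the verification that the series squares to $X$, since the identity $X^{1/2} = Q\,\diag(\sqrt{\lambda_i})\,Q^\top$ is immediate from the spectral theorem. Your Frobenius bound, which exploits orthogonal invariance and the chain $\sum_i r_i^2 \le 4\sum_i(\lambda_i-1)^4 \le 4\,\epsilon^2\sum_i(\lambda_i-1)^2 = 4\epsilon^4$, is also tight in the same way as the paper's $\|(X-I)^2\|\sum_{k\ge 2}2^{-(k-2)} \le 2\epsilon^2$, and both give $\|R_X\|\le 2\epsilon^2$. One small trade-off: your route makes \autoref{lsqrt} unnecessary for this lemma, whereas the paper's approach avoids invoking the spectral theorem and works entirely with the ring of matrices; since the paper uses the spectral decomposition elsewhere anyway, your approach is arguably the more economical of the two here.
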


\begin{proof}[Proof.]
    For any $\|X-I\|=\epsilon\le \frac{1}{2}$, we define $(X-I)^0:=I$, and 
    \begin{equation*}
        f_N(X):=\sum_{k=0}^N \binom{\frac{1}{2}}{k}(X-I)^k,\quad \forall N\ge2.
    \end{equation*}
    Note that
    \begin{equation*}
        \left\|\binom{\frac{1}{2}}{k}(X-I)^k\right\|\le \|(X-I)^k\|\le \frac{1}{2^k},\quad \forall  k\ge 1.
    \end{equation*}
    This implies that, 
    \begin{equation*}
        \sup_{l,s\ge N}|(f_l(X)-f_s(X))_{ij}|\le \sup_{l,s\ge N}\|f_l(X)-f_s(X)\|\le \frac{1}{2^{N}},\quad \forall1\le i,j\le p,\ N\ge 2.
    \end{equation*}
    It follows that, for $\forall 1\le i,j\le p$,  $\{(f_N(X))_{ij}\}_{N=2}^\infty$ is a Cauchy sequence in $\mathbb{R}$. Hence, each entry of $f_N(X)$ converges to a finite limit when $N$ goes to $\infty$, implying that 
    $f_N(X)$ converges in $\|\cdot\|$ to a $p\times p$ matrix, denoted by
    \begin{equation*}
        f_\infty(X):=I+\sum_{k=1}^\infty \binom{\frac{1}{2}}{k}(X-I)^k.
    \end{equation*}
   Since each $f_N(X)$ is  symmetric, $ f_\infty(X)$ is also symmetric. We claim that:
    \begin{equation}\label{st1}
         f_\infty^2(X)=X,\quad f_\infty(X)>0 \ (\mbox{positive definite}).
    \end{equation}
  The first statement of \eqref{st1} follows directly from \autoref{lsqrt}. Indeed, we set
    \begin{equation*}
        a_{X,0}=I;\  a_{X,k}=\binom{\frac{1}{2}}{k}(X-I)^k,\ \forall k\ge 1.
    \end{equation*}
    Then, 
    \begin{equation*}
        \sum_{j=0}^\infty \sum_{k=0}^\infty \|a_{X,j}a_{X,k}\|\le
         \sum_{j=0}^\infty \sum_{k=0}^\infty \frac{p}{2^{j+k}}<\infty.
    \end{equation*}
    Therefore, changing the order of summation does not affect the value of $\sum_{j=0}^\infty \sum_{k=0}^\infty a_{X,j}a_{X,k}$. This leads to
    \begin{equation*}
        f_\infty^2(X)=\left(\sum_{j=0}^\infty a_{X,j}\right)^2= \sum_{j=0}^\infty \sum_{k=0}^\infty a_{X,j}a_{X,k}=\sum_{s=0}^\infty \left(\sum_{j=0}^sa_{X,j}a_{X,s-j}\right)=I+X-I=X,
    \end{equation*}
    where we have applied results established in  \autoref{lsqrt}. 
    Next, we verify the second statement of \eqref{st1}. For any $p$-dimensional vector $\boldsymbol{x}$ satisfying $\|\boldsymbol{x}\|=1$, we have
    \begin{equation*}
        \boldsymbol{x}^\top f_\infty(X)\boldsymbol{x}\ge 1-\sum_{k=1}^\infty\left|\binom{\frac{1}{2}}{k}\right|\|X-I\|^k\ge 1-\frac{1}{4}-\sum_{k=2}^\infty \frac{1}{2^k}>0,
    \end{equation*}
   which implies that $ f_{\infty}(X) > 0 $. Therefore, \eqref{st1} holds, and consequently, 
    \begin{equation*}
        X^\frac{1}{2}= f_\infty(X)=I+\sum_{k=1}^\infty \binom{\frac{1}{2}}{k}(X-I)^k.
    \end{equation*}
    Lastly, from the estimation below,
    \begin{equation*}
     \sum_{k=2}^\infty \left\|  \binom{\frac{1}{2}}{k}(X-I)^k\right\|\le 
      \left\|(X-I)^2\right\|\left(\sum_{k=2}^\infty \frac{1}{2^{k-2}}\right)\le 2\epsilon^2,
    \end{equation*}
    we obtain the desired result.
\end{proof}
 The next lemma characterize the limiting distribution  of the $\chi$ random variable (i.e. the square root of a chi-squared variable), which corresponds to the case $p=1$  in   \autoref{thmI}.

\begin{lemma}\label{lodchi}
For any fixed $x\in \mathbb{Z}$, we have
    \begin{equation}\label{odchi}
        \sqrt{\chi_{m+x}^2}-\sqrt{m}\overset{d}{\to} \mathcal{N}\left(0,\frac{1}{2}\right),\quad m\to\infty,
    \end{equation}
where, throughout, $\chi^2_{m+x}$ denotes a random variable following a $\chi_{m+x}^2$ distribution.
\end{lemma}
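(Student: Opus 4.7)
The plan is to reduce the lemma to the one-dimensional central limit theorem combined with the delta method, and then handle the offset $x$ via the Slutsky-type result in \autoref{Slusky}.

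First, I would write $\chi^2_{m+x} = \sum_{i=1}^{m+x} Z_i^2$ where $Z_1, Z_2, \ldots$ are i.i.d.\ $\mathcal{N}(0,1)$ random variables. Since $\mathbb{E}[Z_i^2]=1$ and $\mathrm{Var}(Z_i^2)=2$, the classical central limit theorem yields
\begin{equation*}
\sqrt{m+x}\left(\frac{\chi^2_{m+x}}{m+x}-1\right)\xrightarrow{d}\mathcal{N}(0,2),\qquad m\to\infty.
\end{equation*}

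Next, I would apply the delta method with $g(y)=\sqrt{y}$, which is differentiable at $y=1$ with derivative $g'(1)=\tfrac12$. This gives
\begin{equation*}
\sqrt{m+x}\left(\sqrt{\tfrac{\chi^2_{m+x}}{m+x}}-1\right)\xrightarrow{d}\mathcal{N}\!\left(0,\tfrac14\cdot 2\right)=\mathcal{N}\!\left(0,\tfrac12\right),
\end{equation*}
which simplifies directly to $\sqrt{\chi^2_{m+x}}-\sqrt{m+x}\xrightarrow{d}\mathcal{N}(0,\tfrac12)$.

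Finally, I would remove the discrepancy between $\sqrt{m+x}$ and $\sqrt{m}$. Note that
\begin{equation*}
\sqrt{m+x}-\sqrt{m}=\frac{x}{\sqrt{m+x}+\sqrt{m}}\longrightarrow 0,\qquad m\to\infty,
\end{equation*}
so this deterministic offset converges in probability to $0$. Writing $\sqrt{\chi^2_{m+x}}-\sqrt{m}=(\sqrt{\chi^2_{m+x}}-\sqrt{m+x})+(\sqrt{m+x}-\sqrt{m})$ and invoking \autoref{Slusky} (with $k=1$) concludes the proof.

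There is no significant obstacle here; the argument is essentially a textbook application of CLT, delta method, and Slutsky, and serves mainly as the base case ($p=1$) for the multivariate result in \autoref{thmI}.
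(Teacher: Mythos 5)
Your proof is correct. The one substantive difference from the paper's argument is in the middle step: you invoke the delta method with $g(y)=\sqrt{y}$ to pass from the CLT for $\chi^2_{m+x}$ to the CLT for its square root, whereas the paper avoids the delta method and instead rationalizes directly, writing $\sqrt{\chi^2_{m+x}}-\sqrt{m+x}=\dfrac{\chi^2_{m+x}-(m+x)}{\sqrt{\chi^2_{m+x}}+\sqrt{m+x}}$, then combining the CLT for the numerator with the law-of-large-numbers limit $\dfrac{\sqrt{m+x}}{\sqrt{\chi^2_{m+x}}+\sqrt{m+x}}\xrightarrow{P}\tfrac12$ via Slutsky. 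The two routes are of course equivalent in effect (the delta method is precisely this kind of linearization wrapped in a black box), but the paper's explicit algebraic identity is deliberate: as remarked after \autoref{thmI}, the authors eschew the delta method in favor of manipulations that carry over to the quantitative analysis, and indeed the identity $\sqrt{X_1}-\sqrt m = \dfrac{X_1-m}{2\sqrt m}+R_1$ used in \autoref{x1} is the rate-giving version of the same rationalization. Your delta-method argument gives the qualitative limit cleanly but would not immediately yield the Wasserstein bound in \autoref{x1}; for the present lemma, which is purely distributional, either route is fine. The final step (absorbing the deterministic offset $\sqrt{m+x}-\sqrt m\to 0$ via \autoref{Slusky}) matches the paper exactly.
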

\begin{proof}[Proof.]
    From the Central Limit Theorem, it follows that
    \begin{equation*}
        \frac{\chi_{m+x}^2-(m+x)}{\sqrt{m+x}}\overset{d}{\to} \mathcal{N}(0,2),\quad m\to \infty.
    \end{equation*}
By the law of large numbers, we have 
    \begin{equation*}
       \frac{{\chi_{m+x}^2}}{{m+x}}\xrightarrow{P } 1,\quad 
       \frac{\sqrt{m+x}}{\sqrt{\chi_{m+x}^2}+\sqrt{m+x}}\xrightarrow{P }  \frac{1}{2},\quad m\to\infty.
    \end{equation*}
    and thus, by Slutsky's lemma
    \begin{equation}\label{d}
        \sqrt{\chi_{m+x}^2}-\sqrt{m+x}=\frac{\chi_{m+x}^2-{(m+x)}}{\sqrt{\chi^2_{m+x}}+\sqrt{m+x}}\overset{d}{\to} \mathcal{N}\left(0,\frac{1}{2}
        \right).
    \end{equation}
Furthermore, noting that
    \begin{equation}\label{p}
        \sqrt{m+x}-\sqrt{m} \rightarrow  0,\quad m\to \infty,
    \end{equation}
and applying  Slutsky's lemma once again, we obtain    \eqref{odchi}  from \eqref{d} and \eqref{p}. 
    Hence, the proof is complete.
\end{proof}
We now turn to the proof of \autoref{thmI}.
\begin{proof}[ Proof of \autoref{thmI}]\label{pf2}
Define $L_m$ as in \eqref{n}, and 
set
\begin{equation*}
  T_m:= L_m-\sqrt{m}I.
\end{equation*}
Then, $T_m$ is a lower triangular matrix whose diagonal entries satisfy the following distributional properties:
\begin{equation*}
    (T_m)_{jj}\sim \sqrt{\chi^2_{m-j+1}}-\sqrt{m} ,\quad j=1,2,\ldots,p,
\end{equation*}
and whose strictly lower-triangular entries follow $\mathcal{N}(0,1)$.
Since the entries of $(L_m)_{lw}$ are independent, it follows that the components of $(T_m)_{lw}$ are also mutually independent. Now, we  apply \autoref{psqrt} to characterize the square root of $L_mL_m^\top$. Note that if $\|T_m\|\le \frac{\sqrt{m}}{5}$, then
\begin{equation*}
    \left\|\frac{T_m+T_m^\top}{\sqrt{m}}+\frac{T_mT_m^\top}{m}\right\|\le \frac{2\|T_m\|}{\sqrt{m}}+\frac{\|T_m\|^2}{m} <\frac{1}{2}.
\end{equation*}
Therefore, by \autoref{psqrt}, we have
\begin{equation*}
    \left(I+\frac{T_m+T_m^\top}{\sqrt{m}}+\frac{T_mT_m^\top}{m}\right)^\frac{1}{2}=I+\frac{T_m+T_m^\top}{2\sqrt{m}}+\frac{T_mT_m^\top}{2m}+R,
\end{equation*}
where $R$ is symmetric and satisfies
\begin{equation*}
    \|R\|\le 2 \left(\frac{11}{5}\frac{\|T_m\|}{\sqrt{m}}\right)^2<10\left(\frac{\|T_m\|}{\sqrt{m}}\right)^2.
\end{equation*}
Since $\|\frac{T_mT_m^\top}{m}\|\le \left(\frac{\|T_m\|}{\sqrt{m}}\right)^2$,  absorbing the term  $\frac{T_mT_m^\top}{2m}$  into the remainder yields
\begin{equation*}
     \left(I+\frac{T_m+T_m^\top}{\sqrt{m}}+\frac{T_mT_m^\top}{m}\right)^\frac{1}{2}=I+\frac{T_m+T_m^\top}{2\sqrt{m}}+R',\quad\|R'\|\le 11\left(\frac{\|T_m\|}{\sqrt{m}}\right)^2,\quad \text{if }\frac{\|T_m\|}{\sqrt{m}}\le\frac{1}{5}.
\end{equation*}
Direct calculation further shows
\begin{align}\label{del}
    (L_mL_m^\top)^\frac{1}{2}-\sqrt{m}I=&\left((\sqrt{m}I+T_m)(\sqrt{m}I+T_m^\top)\right)^\frac{1}{2}-\sqrt{m}I\nonumber       \\
    =& \sqrt{m}\left(I+\frac{T_m+T_m^\top}{\sqrt{m}}+\frac{T_mT_m^\top}{m}\right)^\frac{1}{2}-\sqrt{m}I\nonumber\\
    =&1_{\|T_m\|\le \frac{\sqrt{m}}{5}}\left(\left(\frac{T_m+T_m^\top}{2}\right)+\sqrt{m}R'\right)+1_{\|T_m\|> \frac{\sqrt{m}}{5}}\left( (L_mL_m^\top)^\frac{1}{2}-\sqrt{m}I\right)\nonumber\\
    :=& \frac{T_m+T_m^\top}{2}+1_{\|T_m\|\le \frac{\sqrt{m}}{5}}\sqrt{m}R'+1_{\|T_m\|> \frac{\sqrt{m}}{5}}R'',
\end{align}
where
\begin{equation*}
    R''=1_{\|T_m\|> \frac{\sqrt{m}}{5}}\left( \left(L_mL_m^\top\right)^\frac{1}{2}-\sqrt{m}I-\frac{T_m+T_m^\top}{2}\right).
\end{equation*}
Next, we evaluate $\mathbb{E}[\|T_m\|^2]$. For any $j=1,2,\ldots,p$, we have
$$\mathbb{E}\left[\left(\chi^2_{m-j+1}-m\right)^2\right] =2(m-j+1)+(j-1)^2,
$$
which implies that
\begin{equation*}
    \mathbb{E}\left(\sqrt{\chi^2_{m-j+1}}-\sqrt{m}\right)^2
      =\mathbb{E}\left( \frac{\chi^2_{m-j+1}-m}{\sqrt{\chi^2_{m-j+1}}+\sqrt{m}} \right)^2
      \le \mathbb{E} \left( \frac{\chi^2_{m-j+1}-m}{ \sqrt{m}} \right)^2 
    = \frac{2(m-j+1)+(j-1)^2}{m}.
\end{equation*}
Therefore,
\begin{align*}
    \mathbb{E}[\|T_m\|^2]&\le \frac{p(p-1)}{2}+\sum_{j=1}^p \frac{2(m-j+1)+(j-1)^2}{m}\\
    &<\frac{p^3}{3m}+\frac{p(p+3)}{2}\\
    &\le \frac{5}{6}p^2+\frac{3}{2}p.
\end{align*}
It follows from Chebyshev's Inequality that, as $m\to \infty$
\begin{equation}\label{p1}
    P\left(\|T_m\|>\frac{\sqrt{m}}{5}\right)\le \frac{25}{m}\left( \frac{5}{6}p^2+\frac{3}{2}p\right)\to 0, \quad 1_{\|T_m\|> \frac{\sqrt{m}}{5}}R''_{up}\overset{P}{\to} \boldsymbol{0}.
\end{equation}
In addition,
\begin{equation*}
    \mathbb{E}\left[1_{\|T_m\|\le \frac{\sqrt{m}}{5}}\sqrt{m}\|R'\|\right]\le 11\mathbb{E}\left[\frac{\|T_m\|^2}{\sqrt{m}}\right]\le \frac{11}{\sqrt{m}}\left(\frac{5}{6}p^2+\frac{3}{2}p\right)\to 0,\quad m\to \infty.
\end{equation*}
Since $\|R_{up}'\|\le \|R'\|$, Chebyshev's Inequality further implies that
\begin{equation}\label{p2}
    1_{\|T_m\|\le \frac{\sqrt{m}}{5}}\sqrt{m}R'_{up}\overset{P}{\to }\boldsymbol{0}.
\end{equation}
Note that the entries of $\left( \frac{T_m+T_m^\top}{2}\right)_{up}$ are independently distributed. Specifically, the first $p$ entries, which follow $\sqrt{\chi^2_{m-j+1}}-\sqrt{m}$ for $\ j=1,2,\ldots,p$, respectively, converge in  distribution to $\mathcal{N}(0,\frac{1}{2})$, as established in  \autoref{lodchi}. Moreover, the remaining entries follow $\mathcal{N}(0,\frac{1}{4})$. Hence, we conclude that,

\begin{equation}\label{p3}
    \left( \frac{T_m+T_m^\top}{2}\right)_{up}\overset{d}{\to}  \mathcal{N}_{p(p+1)/2}\left(\mathbf{0},\left(\begin{array}{cc}\frac{1}{2}I_{p}&0\\0&\frac{1}{4}I_{p(p-1)/2}\end{array}\right)\right).
\end{equation}
By selecting the upper triangular entries of \eqref{del}, and combining  \eqref{p1}, \eqref{p2} and \eqref{p3}, we obtain
\begin{equation*}
    \left( (L_mL_m^\top)^\frac{1}{2}-\sqrt{m}I\right)_{up}\overset{d}{\to } 
    \mathcal{N}_{p(p+1)/2}\left(\mathbf{0},\left(\begin{array}{cc}\frac{1}{2}I_{p}&0\\0&\frac{1}{4}I_{p(p-1)/2}\end{array}\right)\right),
\end{equation*}
where we have applied Slutsky’s lemma (\autoref{Slusky}).
Finally, from \autoref{npro}, we arrive at \eqref{conres1}. The proof is complete.
\end{proof}

\section{Convergence rate of $(W_p(m))^{\frac{1}{2}}$}

In this section, we evaluate the convergence rate of $W^{\frac{1}{2}}$. 
Specifically, we elaborate on the proof of \autoref{thmI} and derive an upper bound on the convergence rate in terms of the 1-Wasserstein distance, defined as follows.
\begin{definition}
    For $\mathbb{R}^d$-valued random vectors $\boldsymbol{X}$
and $\boldsymbol{Y}$, the 1-Wasserstein distance between them is denoted by 
\begin{equation*}
    d_W(\boldsymbol{X},\boldsymbol{Y}):=\sup_{h:Lip(h)\le 1}  \left|\mathbb{E}[h(\boldsymbol{X})-h(\boldsymbol{Y})]\right|.
\end{equation*}
\end{definition}
Some useful properties of this distance are summarized below (we assume that $\boldsymbol{X},\ \boldsymbol{Y},\ \boldsymbol{Z},\ \boldsymbol{T}$ are $\mathbb{R}^d$-valued random vectors for some $d\in \mathbb{N}^+$):
\begin{align*}
   (P_1)&:\  d_W(a\boldsymbol{X},a\boldsymbol{Y})=|a|d_W(\boldsymbol{X},\boldsymbol{Y}),\ d_W(\boldsymbol{x}+\boldsymbol{X},\boldsymbol{x}+\boldsymbol{Y})=d_W(\boldsymbol{X},\boldsymbol{Y}),\ \forall a\in \mathbb{R},\boldsymbol{x}\in \mathbb{R}^d.\\
   (P_2)&: \  d_W(\boldsymbol{X},\boldsymbol{Y})\le d_W(\boldsymbol{X},\boldsymbol{Z})+d_W(\boldsymbol{Z},\boldsymbol{Y}).        \\
   (P_3)&:\  d_W(\boldsymbol{X}+\boldsymbol{Z},\boldsymbol{Y}+\boldsymbol{Z})=\sup_{h:Lip(h)\le 1}  |\mathbb{E}[h(\boldsymbol{X}+\boldsymbol{Z})-h(\boldsymbol{Y}+\boldsymbol{Z})]| \le \mathbb{E}\left[\|\boldsymbol{X}-\boldsymbol{Y}\|\right].                      \\
   (P_4)&:\   d_W(\boldsymbol{X}+\boldsymbol{Y},\boldsymbol{Z})\le d_W(\boldsymbol{X},\boldsymbol{Z})+d_W(\boldsymbol{X},\boldsymbol{X}+\boldsymbol{Y})\le d_W(\boldsymbol{X},\boldsymbol{Z})+\mathbb{E}\left[\|\boldsymbol{Y}\|\right].                   \\
   (P_5)&:\ \text{If $\boldsymbol{X}$, $\boldsymbol{Y}$,  $\boldsymbol{Z}$ and $\boldsymbol{T}$ are independent, then } d_W(\boldsymbol{X}+\boldsymbol{Y},\boldsymbol{Z}+\boldsymbol{T})\le d_W(\boldsymbol{X},\boldsymbol{Z})+d_W(\boldsymbol{Y},\boldsymbol{T}).\\
   (P_6)&:\  \text{If } X_1,X_2,\ldots,X_k \text{ and } Y_1,Y_2,\ldots,Y_k \text{ are independent one-dimensional random variables, then }\\
   &\quad d_W((X_1,X_2,\ldots,X_k)^\top, \ (Y_1,Y_2,\ldots,Y_k)^\top)\le \sum_{j=1}^k d_W(X_j,Y_j).
\end{align*}
Properties $(P_1)\verb| - |(P_4)$ can be easily verified. Property $(P_5)$ holds  because
\begin{align*}
    d_W(\boldsymbol{X}+\boldsymbol{Y},\boldsymbol{Z}+\boldsymbol{T})&\le d_W(\boldsymbol{X}+\boldsymbol{Y},\boldsymbol{Z}+\boldsymbol{Y}) +d_W(\boldsymbol{Z}+\boldsymbol{Y},\boldsymbol{Z}+\boldsymbol{T})  \\
    &= \sup_{h:Lip(h)\le 1} \left|\mathbb{E}\left[\mathbb{E}[h(\boldsymbol{X}+\boldsymbol{Y})-h(\boldsymbol{Z}+\boldsymbol{Y})|\boldsymbol{Y}]\right]\right|\\
    &\quad+\sup_{h:Lip(h)\le 1} |\mathbb{E}\left[\mathbb{E}[h(\boldsymbol{Z}+\boldsymbol{Y})-h(\boldsymbol{Z}+\boldsymbol{T})|\boldsymbol{Z}]\right]|\\
    &\le d_W(\boldsymbol{X},\boldsymbol{Z})+d_W(\boldsymbol{Y},\boldsymbol{T}).
\end{align*}
In the final step, we utilize the independence property, which ensures that the conditional distributions of  $(\boldsymbol{X},\boldsymbol{Z})|_{\boldsymbol{Y}}$ and $( \boldsymbol{Y},\boldsymbol{T})|_{\boldsymbol{Z}}$ coincide with those of  $(\boldsymbol{X},\boldsymbol{Z})$ and $( \boldsymbol{Y},\boldsymbol{T})$, respectively.
To prove $(P_6)$, note that the sets $\{(0,\ldots,X_j,\ldots,0)\}_{1\le j\le k}$ and 
$\{(0,\ldots,Y_j,\ldots,0)\}_{1\le j\le k}$ are independent The result then  follows by  induction together with  $(P_5)$.

Now, following the approach used in the proof of \autoref{thmI}, we derive an upper bound for the convergence rate of $W^\frac{1}{2}$ in the 1-Wasserstein distance.

\begin{theorem}\label{secbound}
    Suppose that $m\ge 2p$, and
    \begin{equation*}
        \boldsymbol{Z'}\sim \mathcal{N}_{p(p+1)/2}\left(\mathbf{0},\left(\begin{array}{cc}\frac{1}{2}I_{p}&0\\0&\frac{1}{4}I_{p(p-1)/2}\end{array}\right)\right).
    \end{equation*}
    Then,
    \begin{equation*}
        d_W\left(\left(\left(W_p(m)\right)^\frac{1}{2}-\sqrt{m}I\right)_{up},\boldsymbol{Z'}\right)< \frac{1}{\sqrt{m}}\left(42p^\frac{5}{2}+9p^\frac{9}{4}+18p^2+75p^\frac{3}{2}+15p^\frac{5}{4}+56p\right)=O(p^{2.5}/\sqrt{m}).
    \end{equation*}
\end{theorem}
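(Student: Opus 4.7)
The plan is to quantify each step of the proof of \autoref{thmI} in the 1-Wasserstein metric. By the Bartlett decomposition (\autoref{npro}), I may take $W=L_mL_m^\top$ and set $T_m:=L_m-\sqrt{m}I$. The matrix expansion derived in \eqref{del} gives
\[
\bigl((L_mL_m^\top)^{1/2}-\sqrt{m}I\bigr)_{up}=\left(\frac{T_m+T_m^\top}{2}\right)_{up}+E_{up},
\]
where $E:=1_{\|T_m\|\le\sqrt{m}/5}\,\sqrt{m}R'+1_{\|T_m\|>\sqrt{m}/5}\,R''$ is the Taylor-type remainder already isolated there. Property $(P_4)$ then yields
\[
d_W\!\bigl((W^{1/2}-\sqrt{m}I)_{up},\boldsymbol{Z}'\bigr)\le d_W\!\bigl(((T_m+T_m^\top)/2)_{up},\boldsymbol{Z}'\bigr)+\mathbb{E}\|E_{up}\|,
\]
so the task splits into controlling these two quantities and combining them at rate $O(p^{5/2}/\sqrt{m})$.

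For the error term, the bound $\|R'\|\le 11\|T_m\|^2/m$ on $\{\|T_m\|\le\sqrt{m}/5\}$ is already in the proof of \autoref{thmI}; together with $\mathbb{E}\|T_m\|^2\le\tfrac{5}{6}p^2+\tfrac{3}{2}p$, this delivers $\mathbb{E}[\sqrt{m}\|R'\|\,1_{\|T_m\|\le\sqrt{m}/5}]=O(p^2/\sqrt{m})$. For the rare-event piece I would use the deterministic bound $\|R''\|\le\|L_m\|+\sqrt{mp}+\|T_m\|\le 2(\sqrt{mp}+\|T_m\|)$ and exploit the indicator: on $A:=\{\|T_m\|>\sqrt{m}/5\}$, Chebyshev gives $P(A)\le 25\mathbb{E}\|T_m\|^2/m$, while $\|T_m\|1_A\le (5/\sqrt{m})\|T_m\|^2$, so
\[
\mathbb{E}\bigl[\|R''\|\,1_A\bigr]\le 2\sqrt{mp}\,P(A)+2\,\mathbb{E}[\|T_m\|1_A]=O(p^{5/2}/\sqrt{m}),
\]
which becomes the dominant contribution.

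To bound the main term, I will use $(P_6)$ together with the independence of the entries of $((T_m+T_m^\top)/2)_{up}$. The off-diagonal entries are exactly $\mathcal{N}(0,1/4)$ and contribute zero, while each diagonal entry equals $\sqrt{\chi^2_{m-j+1}}-\sqrt{m}$. Setting $n=m-j+1$ and applying $(P_2)$,
\[
d_W\!\bigl(\sqrt{\chi^2_n}-\sqrt{m},\mathcal{N}(0,1/2)\bigr)\le d_W\!\bigl(\sqrt{\chi^2_n}-\sqrt{n},\mathcal{N}(0,1/2)\bigr)+|\sqrt{n}-\sqrt{m}|,
\]
and the deterministic shifts $|\sqrt{n}-\sqrt{m}|\le (j-1)/\sqrt{m}$ sum to $O(p^2/\sqrt{m})$. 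For the random piece I will use the identity
\[
\sqrt{\chi^2_n}-\sqrt{n}-\frac{\chi^2_n-n}{2\sqrt{n}}=-\frac{(\chi^2_n-n)^2}{2\sqrt{n}(\sqrt{\chi^2_n}+\sqrt{n})^2},
\]
whose absolute expectation is at most $1/\sqrt{n}$ (because $(\sqrt{\chi^2_n}+\sqrt{n})^2\ge n$ and $\mathbb{E}(\chi^2_n-n)^2=2n$). Property $(P_3)$ then reduces the problem to $d_W((\chi^2_n-n)/(2\sqrt{n}),\mathcal{N}(0,1/2))$, which is $O(1/\sqrt{n})$ by the Wasserstein version of the Berry--Esseen bound applied to the i.i.d.\ sum $\chi^2_n=\sum_{i=1}^n Z_i^2$.

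The main obstacle will be the rare-event contribution: a naive Cauchy--Schwarz using only $P(A)^{1/2}$ would lose a factor $\sqrt{m}$ on the $\sqrt{mp}$ bound for $\|R''\|$ and thus fail to vanish as $m\to\infty$. The indicator trick $\|T_m\|1_A\le (5/\sqrt{m})\|T_m\|^2$, combined with Chebyshev, is what forces this piece down to the required $O(p^{5/2}/\sqrt{m})$. The rest of the proof is bookkeeping: tracking the explicit constants in each estimate above and assembling them into the polynomial in $p$ stated in the theorem.
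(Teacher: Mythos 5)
Your proposal follows the same overall strategy as the paper---quantifying the proof of \autoref{thmI} via the Bartlett decomposition, the matrix Taylor expansion from \autoref{psqrt} split along the indicator $1_{\|T_m\|\le\sqrt{m}/5}$, Property $(P_6)$ for the main term, and a Wasserstein Berry--Esseen bound for the chi-square CLT---and it is correct. However, you deviate from the paper in three localized spots, each time in a way that is both cleaner and tighter. First, to bound $\|R''\|$ you use the exact Frobenius identity
\[
\bigl\|(L_mL_m^\top)^{1/2}\bigr\|_F^2=\operatorname{tr}(L_mL_m^\top)=\|L_m\|_F^2,
\]
whereas the paper invokes the one-sided Cauchy--Schwarz inequality $\|\sqrt{G}\|^2\le\sqrt{p}\,\|G\|$ and then separately bounds $\|L_mL_m^\top\|$; your route avoids the spurious $p^{1/4}$ factor and is simply better. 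Second, to estimate $\mathbb{E}[\|T_m\|\,1_A]$ you use the pointwise trick $\|T_m\|\,1_A\le\frac{5}{\sqrt m}\|T_m\|^2$, while the paper integrates the tail via a layer-cake representation; yours halves the constant. Third, for the scalar chi term you substitute the closed-form remainder identity
\[
\sqrt{\chi_n^2}-\sqrt n-\frac{\chi_n^2-n}{2\sqrt n}=-\frac{\bigl(\sqrt{\chi_n^2}-\sqrt n\bigr)^2}{2\sqrt n},
\]
whose absolute expectation is bounded by $1/\sqrt n$ with no case analysis, whereas the paper's \autoref{x1} re-applies the operator expansion and splits into three regimes ($X_1/m>3/2$, $|X_1/m-1|\le1/2$, $X_1/m<1/2$), accumulating a constant of $16$. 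Everything you would need for the bookkeeping is present, and because each of your local bounds is no weaker than the paper's, assembling your constants would produce a bound at least as sharp as the stated polynomial $42p^{5/2}+\cdots+56p$ (with the $p^{9/4}$ and $p^{5/4}$ terms, which arise solely from the paper's $p^{1/4}$ factor, disappearing). One minor misdirection: you frame the ``naive Cauchy--Schwarz with $P(A)^{1/2}$'' as the obstacle, but in fact $\sqrt{mp}\,P(A)$ is already $O(p^{5/2}/\sqrt m)$ without any Cauchy--Schwarz, so the indicator trick is only needed for the $\|T_m\|\,1_A$ piece; this does not affect the correctness of what you wrote.
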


The following lemma provides a bound in the one-dimensional case.

\begin{lemma}\label{x1}
     Assume that $X_1\sim \chi^2_m$. Then,
    \begin{equation*}
        d_W\left(\sqrt{X_1}-\sqrt{m},\mathcal{N}\left(0,\frac{1}{2}\right)\right)< \frac{16}{\sqrt{m}}.
    \end{equation*}
\end{lemma}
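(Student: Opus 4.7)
The plan is to linearize $\sqrt{X_1}-\sqrt{m}$ about $\sqrt{m}$ and reduce the lemma to a Wasserstein-$1$ central limit theorem for a centred, scaled chi-squared. The key identity is
\begin{equation*}
\sqrt{X_1}-\sqrt{m} \;=\; \frac{X_1-m}{\sqrt{X_1}+\sqrt{m}} \;=\; \frac{X_1-m}{2\sqrt{m}} \;-\; \frac{(\sqrt{X_1}-\sqrt{m})^2}{2\sqrt{m}},
\end{equation*}
which exhibits a leading linear term whose variance is exactly $\tfrac{1}{2}$ together with an explicit quadratic remainder. Using the triangle inequality $(P_2)$ together with the coupling bound $d_W(U,V)\le \mathbb{E}|U-V|$ (a consequence of $(P_3)$ applied to the natural coupling through $X_1$), I would split
\begin{equation*}
d_W\!\bigl(\sqrt{X_1}-\sqrt{m},\, \mathcal{N}(0,\tfrac{1}{2})\bigr) \;\le\; \mathbb{E}\!\left[\frac{(\sqrt{X_1}-\sqrt{m})^2}{2\sqrt{m}}\right] \;+\; d_W\!\left(\frac{X_1-m}{2\sqrt{m}},\, \mathcal{N}(0,\tfrac{1}{2})\right).
\end{equation*}

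For the quadratic remainder I would use $(\sqrt{X_1}-\sqrt{m})^2\le (X_1-m)^2/m$ (since $\sqrt{X_1}+\sqrt{m}\ge\sqrt{m}$) together with $\mathrm{Var}(X_1)=2m$, giving $\mathbb{E}[(\sqrt{X_1}-\sqrt{m})^2/(2\sqrt{m})]\le 1/\sqrt{m}$; this is precisely the moment computation already used in the proof of \autoref{lodchi}. For the main term I would write $X_1-m=\sum_{i=1}^m Y_i$ with $Y_i=Z_i^2-1$ i.i.d.\ of mean zero and variance $2$, and invoke a Wasserstein-$1$ Berry--Esseen bound of the form
\begin{equation*}
d_W\!\left(\frac{1}{\sqrt{m}}\sum_{i=1}^m Y_i,\, \mathcal{N}(0,2)\right) \;\le\; \frac{C\,\mathbb{E}|Y_1|^3}{\sqrt{m}},
\end{equation*}
obtained via Stein's method (e.g.\ zero-bias coupling or exchangeable pairs). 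Bounding $\mathbb{E}|Z^2-1|^3\le \mathbb{E}(Z^2+1)^3 = 28$ and rescaling by $1/2$ through $(P_1)$ produces a term of order $1/\sqrt{m}$; adding the earlier $1/\sqrt{m}$ bound on the remainder, a careful accounting of the constant keeps the total below $16/\sqrt{m}$.

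The main obstacle is obtaining the Wasserstein-$1$ CLT bound with a sufficiently small explicit constant. A naive attempt to apply properties $(P_5)$ or $(P_6)$ to $\sum(Z_i^2-1)$ against a matching Gaussian sum dominates by $\sum d_W(Y_i,G_i)=O(m)$, yielding an error of order $\sqrt{m}$ after rescaling --- the wrong order entirely --- so some genuine CLT machinery beyond the six listed properties is required. Executing Stein's method (or an equivalent Lindeberg-swap argument) and tracking all constants explicitly is the delicate bookkeeping step of the argument; the linearization, the remainder estimate, and the moment bound on $|Z^2-1|^3$ are all routine by comparison.
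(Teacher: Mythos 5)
Your decomposition is in spirit the same as the paper's---linearize $\sqrt{X_1}-\sqrt m$ into the main term $\frac{X_1-m}{2\sqrt m}$ plus a remainder, bound the remainder in $L^1$, and invoke a Wasserstein CLT for the main term---but your treatment of the remainder is genuinely simpler and sharper. The paper obtains the same algebraic remainder $R_1 = \sqrt{X_1}-\sqrt m - \frac{X_1-m}{2\sqrt m}$ by applying the matrix Taylor lemma (\autoref{psqrt}) to $X_1/m$, which only gives $|R_1|\le 2\sqrt m\,(X_1-m)^2/m^2$ on the event $|X_1/m-1|\le 1/2$; this forces a three-case split plus Chebyshev tail estimates and yields $\mathbb E|R_1|\le 12/\sqrt m$. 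You instead write $R_1=-\frac{(\sqrt{X_1}-\sqrt m)^2}{2\sqrt m}$ exactly and use $(\sqrt{X_1}-\sqrt m)^2\le (X_1-m)^2/m$ unconditionally, getting $\mathbb E|R_1|\le 1/\sqrt m$ in two lines with no casework. That is a real improvement and would in fact allow the stated constant $16$ to be lowered.

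However, there is a gap in the CLT step where you have not pinned things down. You propose a generic Stein-type bound $d_W\bigl(m^{-1/2}\sum Y_i, \mathcal N(0,2)\bigr)\le C\,\mathbb E|Y_1|^3/\sqrt m$ with $C$ unspecified, together with the crude bound $\mathbb E|Z^2-1|^3\le 28$. With a commonly cited constant such as $C=3$, after normalizing by $\sigma^3=2\sqrt2$ and rescaling by $1/\sqrt2$ via $(P_1)$, the main term alone already contributes roughly $3\cdot 28/(4\sqrt m)\approx 21/\sqrt m$, which blows past $16/\sqrt m$ before you even add the remainder. Either you need a Stein bound with a smaller constant (the paper uses Reinert's form $d_W\le \frac{1}{\sqrt m}(2+\beta_3)$ with the \emph{standardized} third absolute moment $\beta_3\le 3.1$, giving $\approx 3.6/\sqrt m$), or you need to compute $\mathbb E|Z^2-1|^3$ more precisely ($\approx 8.8$ rather than $28$). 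As written, ``a careful accounting of the constant keeps the total below $16/\sqrt m$'' is an assertion rather than a verification, and with the stated ingredients it does not obviously go through. The fix is easy---cite the Reinert bound as the paper does, or sharpen the third-moment estimate---but it is needed.

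You are also correct that $(P_5)$/$(P_6)$ are useless here and that a genuine CLT input is unavoidable; that diagnosis matches what the paper implicitly does by appealing to an external reference for this step.
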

\begin{proof}[Proof.]
    By applying \autoref{psqrt} to $\frac{X_1}{m}$, we obtain
    \begin{equation*}
        \sqrt{X_1}-\sqrt{m}=\frac{X_1-m}{2\sqrt{m}}+R_1,
    \end{equation*}
    where
    \begin{align*}
        &|R_1|\le 2\sqrt{m}\left(\frac{X_1-m}{m}\right)^2,\ \text{if $\left|\frac{X_1-m}{m}\right|\le \frac{1}{2}$},\\
         &|R_1|= \left|\sqrt{X_1}-\sqrt{m}-\frac{X_1-m}{2\sqrt{m}}\right|<\frac{X_1}{2\sqrt{m}},\ \text{if $\frac{X_1}{m}> \frac{3}{2}$},\\
         &|R_1|= \left|\sqrt{X_1}-\sqrt{m}-\frac{X_1-m}{2\sqrt{m}}\right|\le\frac{\sqrt{m}}{2},\ \text{if $\frac{X_1}{m}< \frac{1}{2}$}.
    \end{align*}
   We present different estimates for three distinct cases, as shown below:
    \begin{align}\label{thres}
        \mathbb{E}\left[1_{\frac{X_1}{m}> \frac{3}{2}}|R_1|\right]
        &<\mathbb{E}\left[\frac{X_1}{2\sqrt{m}}1_{\frac{X_1}{m}> \frac{3}{2}}\right]\nonumber\\ 
        &= \int_{\frac{3}{2}m}^\infty \frac{1}{2\sqrt{m}}P\left(X_1\ge x\right)dx+
        \frac{3\sqrt{m}}{4}P\left(X_1-m>\frac{m}{2}\right)\nonumber\\
       & \le  \frac{1}{2\sqrt{m}} \int_{\frac{3}{2}m}^\infty \frac{\mathbb{E}[(X_1-m)^2]}{(x-m)^2}dx+
        \frac{3\sqrt{m}}{4}P\left(X_1-m>\frac{m}{2}\right)\nonumber\\
        &= \frac{2}{\sqrt{m}}+\frac{3\sqrt{m}}{4}P\left(X_1-m>\frac{m}{2}\right).
        \end{align}
        In addition,
        \begin{align}\label{thres2}
         &\mathbb{E}\left[1_{\left|\frac{X_1-m}{m}\right|\le \frac{1}{2}}|R_1|\right]\le \mathbb{E}\left[ \frac{2\sqrt{m}(X_1-m)^2}{m^2}\right]=\frac{4}{\sqrt{m}},\nonumber\\
          &\mathbb{E}\left[1_{\frac{X_1}{m}< \frac{1}{2}}|R_1|\right]\le \frac{\sqrt{m}}{2}P\left(X_1<\frac{m}{2}\right)\le\frac{\sqrt{m}}{2}P\left(X_1-m\le -\frac{m}{2}\right). 
    \end{align} 
   
    
 Chebyshev's inequality implies that
\begin{equation}\label{ch}
    P\left(X_1-m>\frac{m}{2}\right)+P\left(X_1-m<-\frac{m}{2}\right)\le \frac{\mathbb{E}[(X_1-m)^2]}{\frac{m^2}{4}}=\frac{8}{m}.
\end{equation}
\eqref{thres}, \eqref{thres2} and \eqref{ch} together yield
\begin{equation}\label{com}
    \mathbb{E}\left[1_{\frac{X_1}{m}> \frac{3}{2}}|R_1|\right]+\mathbb{E}\left[1_{\frac{X_1}{m}< \frac{1}{2}}|R_1|\right]\le
    \frac{8}{\sqrt
    {m}}.
\end{equation}
 From the Wasserstein distance bound in \cite{Rei}, we have
    \begin{align*}\label{bs}
        d_W\left(\frac{X_1-m}{2\sqrt{m}},\mathcal{N}\left(0,\frac{1}{2}\right)\right)&=\frac{1}{\sqrt{2}}d_W\left(\frac{\chi_m^2-m}{\sqrt{2m}},\mathcal{N}\left(0,1\right)\right)\\
        &\le  \frac{1}{\sqrt{2m}}  \left(2+\mathbb{E}\left[\left|\frac{\chi_1^2-1}{\sqrt{2}}\right|^3\right]\right)   \\
        &\le \frac{5.1}{\sqrt{2m}}.
   \end{align*}
    By combining \eqref{thres}, \eqref{thres2} and \eqref{com}, and applying Property $(P_4)$, we obtain
    \begin{align*}
        d_W\left(\sqrt{X_1}-\sqrt{m},\mathcal{N}\left(0,\frac{1}{2}\right)\right)\le& d_W\left(\frac{X_1-m}{2\sqrt{m}},\mathcal{N}\left(0,\frac{1}{2}\right)\right)+\mathbb{E}[|R_1|]\\
        \le& d_W\left(\frac{X_1-m}{2\sqrt{m}},\mathcal{N}\left(0,\frac{1}{2}\right)\right)+\mathbb{E}[1_{\frac{X_1}{m}> \frac{3}{2}}|R_1|]\\&+
\mathbb{E}[1_{\left|\frac{X_1-m}{m}\right|\le \frac{1}{2}}|R_1|]+
\mathbb{E}\left[1_{\left|\frac{X_1}{m}\right|< \frac{1}{2}}|R_1|\right]\\
        < &\frac{16}{\sqrt{m}}.
    \end{align*}
    The proof is complete. 
\end{proof}
\begin{proof}[Proof of \autoref{secbound}]    We now adopt the same notation as in the proof of \autoref{thmI}.
    Recall that $T_m$ is a lower triangular matrix with independent entries, whose diagonal elements satisfy
    $(T_m)_{jj}\sim \sqrt{\chi^2_{m-j+1}}-\sqrt{m} ,\ \forall j=1,2,\ldots,p,$
while the strictly lower-triangular entries follow $\mathcal{N}(0,1)$. Recall also that
\begin{equation*}
    \mathbb{E}\left[\|T_m\|^2\right]\le \frac{5}{6}p^2+\frac{3}{2}p,
\end{equation*}
and 
 \begin{align*}
    (L_mL_m^\top)^\frac{1}{2}-\sqrt{m}I= \frac{T_m+T_m^\top}{2}+1_{\|T_m\|\le \frac{\sqrt{m}}{5}}\sqrt{m}R'+1_{\|T_m\|> \frac{\sqrt{m}}{5}}R''.
\end{align*}
We have already obtained an estimate for $\mathbb{E}\left[1_{\|T_m\|\le \frac{\sqrt{m}}{5}}\sqrt{m}\|R'_{up}\|\right]$ in the proof of \autoref{thmI}, 
    \begin{align*}
    \mathbb{E}\left[1_{\|T_m\|\le \frac{\sqrt{m}}{5}}\sqrt{m}\|R'_{up}\|\right]\le  \frac{11}{\sqrt{m}}\left(\frac{5}{6}p^2+\frac{3}{2}p\right).
\end{align*}

Now, we proceed to evaluate $\mathbb{E}[1_{\|T_m\|> \frac{\sqrt{m}}{5}}\|R''_{up}\|]$. Note that if ${\|T_m\|> \frac{\sqrt{m}}{5}}$, then
    \begin{align*}
        \|R''\|&=  \left\|\sqrt{m}\left(I+\frac{T_m+T_m^\top}{\sqrt{m}}+\frac{T_mT_m^\top}{m}\right)^\frac{1}{2}-\sqrt{m}I-\frac{T_m+T_m^\top}{2}\right\|\\
        &\le \sqrt{m}p^{\frac{1}{4}}\left(\frac{\|T_m\|^2}{m}+\frac{2\|T_m\|}{\sqrt{m}}+\sqrt{p}\right)^\frac{1}{2}+\sqrt{mp}+\|T_m\|\\
        &\le p^\frac{1}{4}{\|T_m\|}+2\sqrt{mp}+\|T_m\|.
    \end{align*}
In the first inequality, we use the following fact: if $G$ is a $p\times p$ symmetric positive definite matrix, then $\|\sqrt{G}\|^2\le \sqrt{p} \|G\|$. This inequality follows directly from the Cauchy–Schwarz inequality. We then compute  
    \begin{align*}
   \mathbb{E}\left[1_{\|T_m\|> \frac{\sqrt{m}}{5}}\|T_m\|\right]&= \int_\frac{\sqrt{m}}{5}^\infty P\left(\|T_m\|\ge x\right)dx+\frac{\sqrt{m}}{5}P\left(\|T_m\|> \frac{\sqrt{m}}{5}\right)\\
   &\le\int_\frac{\sqrt{m}}{5}^\infty \frac{\mathbb{E}[\|T_m\|^2]}{x^2}dx+\frac{5}{\sqrt{m}}\left(\frac{5}{6}p^2+\frac{3}{2}p\right)\\
   &\le \frac{10}{\sqrt{m}}\left(\frac{5}{6}p^2+\frac{3}{2}p\right),
    \end{align*}
    \begin{equation*}
        \mathbb{E}\left[1_{\|T_m\|> \frac{\sqrt{m}}{5}}\sqrt{m}\right]=\sqrt{m}P\left(\|T_m\|\ge \frac{\sqrt{m}}{5}\right)\le \frac{25}{\sqrt{m}}\left(\frac{5}{6}p^2+\frac{3}{2}p\right).
    \end{equation*}
    Therefore,
    \begin{align*}
        \mathbb{E}\left[1_{\|T_m\|> \frac{\sqrt{m}}{5}}\|R''_{up}\|\right]&\le \mathbb{E}\left[1_{\|T_m\|> \frac{\sqrt{m}}{5}}\|R''\|\right]\\
        &\le \frac{10}{\sqrt{m}}\left(p^\frac{1}{4}+1\right)\left(\frac{5}{6}p^2+\frac{3}{2}p\right)+\frac{50\sqrt{p}}{\sqrt{m}} \left(\frac{5}{6}p^2+\frac{3}{2}p\right).
    \end{align*}
      From Property $(P_6)$, we have
    \begin{align}\label{main}
        d_W\left( \left(\frac{T_m+T_m^\top}{2}\right)_{up},\boldsymbol{Z}'\right)\le &\sum_{j=1}^p d_W\left(\sqrt{\chi^2_{m-j+1}}-\sqrt{m},\mathcal{N}\left(0,\frac{1}{2}\right)\right)\nonumber
        \\
        \le &\sum_{j=1}^p \left(\left(\sqrt{m}-\sqrt{m-j+1}\right)+\frac{16}{\sqrt{m-j+1}}\right)\nonumber\\
        <& \frac{24p}{\sqrt{m}}+\frac{p^2}{2\sqrt{2m}},
    \end{align}
 where \autoref{x1} has been applied. Combining \eqref{del} and \eqref{main}, we conclude that
    \begin{align*}
        d_W\left(\left((L_mL_m^\top)^\frac{1}{2}-\sqrt{m}I\right)_{up},Z'\right)&\le d_W\left( \left(\frac{T_m+T_m^\top}{2}\right)_{up},\boldsymbol{Z}'\right)\\&\quad+ \mathbb{E}\left[1_{\|T_m\|\le \frac{\sqrt{m}}{5}}\sqrt{m}\|R'_{up}\|+1_{\|T_m\|> \frac{\sqrt{m}}{5}}\|R''_{up}\|\right]\\
        &\le\frac{24p}{\sqrt{m}}+\frac{p^2}{2\sqrt{2m}}+\frac{11}{\sqrt{m}}\left(\frac{5}{6}p^2+\frac{3}{2}p\right)\\
        &\quad+ \frac{10}{\sqrt{m}}\left(p^\frac{1}{4}+1\right)\left(\frac{5}{6}p^2+\frac{3}{2}p\right)+\frac{50\sqrt{p}}{\sqrt{m}} \left(\frac{5}{6}p^2+\frac{3}{2}p\right)\\
        &< \frac{1}{\sqrt{m}}\left(42p^\frac{5}{2}+9p^\frac{9}{4}+18p^2+75p^\frac{3}{2}+15p^\frac{5}{4}+56p\right),
    \end{align*}
 and the desired result follows from \autoref{npro}.
\end{proof}

\section{Numerical experiments}
In this section, we present a simulation study for $V:=(W_p(m))^\frac{1}{2}$. Specifically, we generate $N=10000$ samples from $W_p(m)$ and 
 and compute their corresponding square root matrices via spectral decomposition. For 
$p=2$ and 
$m=5$, the empirical distribution of 
$V_{11}-\sqrt{m-\frac{p}{4}}$
  closely matches a 
$\mathcal{N}(0,\frac{1}{2})$ distribution, while the off-diagonal entry 
$V_{12}$
  aligns with 
$\mathcal{N}(0,\frac{1}{4})$, as illustrated in \autoref{52,11}. The Q–Q plots shown in \autoref{52,QQ} further support the asymptotic normality of the entries.

The empirical mean values and the  covariance matrix of the entries of 
$V$
  for 
$m=5,\  p=2$ are reported in Table \ref{tab:mean} and \ref{tab:covariance}, respectively. The mean values of the diagonal entries are close to $\sqrt{m-\frac{p}{4}}\approx2.12$, while the mean of the off-diagonal  entry is close to 0. The empirical variances of the diagonal entries are approximately 
0.5, and that of the off-diagonal entry is around 
0.25
, consistent with the theoretical limiting variances. Furthermore, the covariances of different entries are small, confirming the asymptotic independence predicted by theory.

These results indicate that the proposed approximation performs well even in low-dimensional settings, and remains accurate when 
$m$ is only moderately larger than 
$p$.

  \begin{figure}[H]
    \centering
    \begin{subfigure}[t]{0.48\linewidth}
        \centering
        \includegraphics[width=\linewidth]{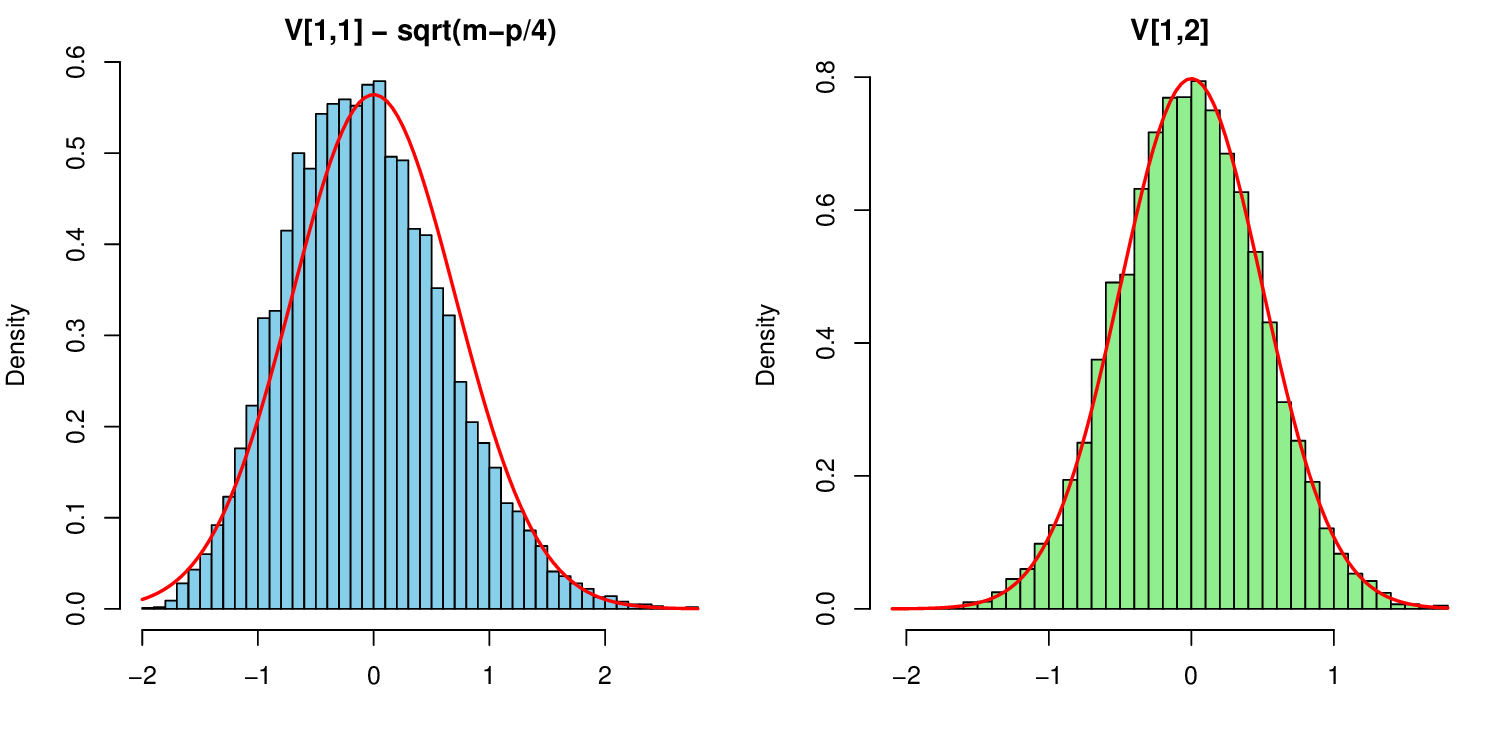}
        \caption{Comparison of empirical densities  of \( V_{11} - \sqrt{m-\frac{p}{4}} \) and \( V_{12} \), overlaid with \( \mathcal{N}(0, 1/2) \) and \( \mathcal{N}(0, 1/4) \), respectively}
        \label{52,11}
    \end{subfigure}
    \hfill
    \begin{subfigure}[t]{0.48\linewidth}
        \centering
        \includegraphics[width=\linewidth]{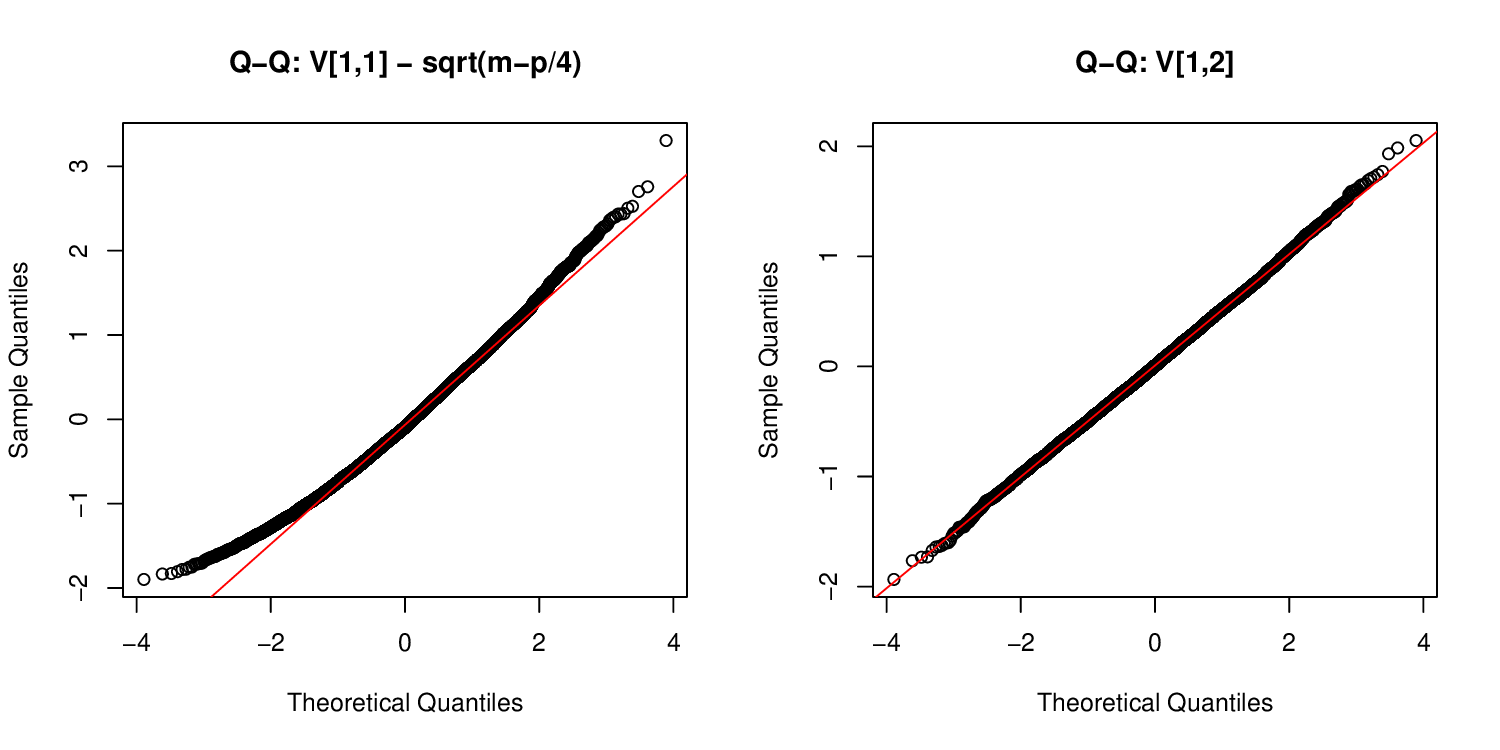}
        \caption{Q-Q plots comparing the empirical distributions of \( V_{11} - \sqrt{m-\frac{p}{4}} \) and \( V_{12} \) to their normal approximations }\label{52,QQ}
        
    \end{subfigure}
    \caption{Distribution and normal Q-Q comparison for $V_{11}-\sqrt{m-\frac{p}{4}}$ and $V_{12}$ with $(m,p)=(5,2)$}
  \label{vcom}
\end{figure}

\begin{table}[ht]
\centering
\caption{Empirical mean values of the entries of $V = \sqrt{W}$ for $p=2$, $m=5$}
\label{tab:mean}
\begin{tabular}{c|ccc}
       & $V_{11}$ & $V_{22}$ & $V_{12}$ \\
\hline
mean & 2.069     & 2.066 & -0.011 \\

\end{tabular}
\end{table}

\begin{table}[H]
\centering
\caption{Empirical covariance matrix of the entries of $V = \sqrt{W}$ for $p=2$, $m=5$}
\label{tab:covariance}
\begin{tabular}{c|ccc}
       & $V_{11}$ & $V_{22}$ & $V_{12}$ \\
\hline
$V_{11}$ & 0.480 & -0.030 & 0.006 \\
$V_{22}$ & -0.030 & 0.476 & 0.003 \\
$V_{12}$ & 0.006 & 0.003 & 0.246 \\
\end{tabular}
\end{table}
We next present numerical experiments for another parameter setting, $(m,p)$. 
\autoref{1251} and \autoref{1252} present the marginal distributions and normality diagnostics for two representative entries of the matrix 
$V=\sqrt{W}$
  when 
$(m,p)=(12,5)$. \autoref{1251} shows the empirical density of 
$V_{11}-\sqrt{m-\frac{p}{4}}$
  and 
$V_{12}$
 , overlaid with their corresponding theoretical normal density curves, $\mathcal{N}(0,\frac{1}{2})$ and $\mathcal{N}(0,\frac{1}{4})$, respectively. 
The distribution of 
$V_{11}-\sqrt{m-\frac{p}{4}}$
  is approximately normal with mean 0 and variance 
close to $0.5$
 , while 
$V_{12}$
  closely follows a normal distribution with variance approximately 
$0.25$. The Q-Q plots in \autoref{1252} further confirm the near-normality of these two random variables.

Tables \ref{tab:mean2} and \ref{tab:covariance2} report the empirical mean and covariance matrix of selected entries of 
$V$. The means of the diagonal entries
$V_{jj},\ \forall 1\le j\le p$
  are close to 
$\sqrt{m-\frac{p}{4}}\approx3.28$, while the off-diagonal entries 
$V_{1j}, \ \forall 1< j\le p$ have means close to 0, as expected. 
The covariance matrix of the first nine entries supports the approximation that these components behave approximately as independent normal variables, with variances around 0.5 for diagonal elements and 0.25 for off-diagonal elements.
These results indicate that the approximation continues to perform very well even as 
$p$ increases.

 \begin{figure}[H]
    \centering
    \begin{subfigure}[t]{0.48\linewidth}
        \centering
        \includegraphics[width=\linewidth]{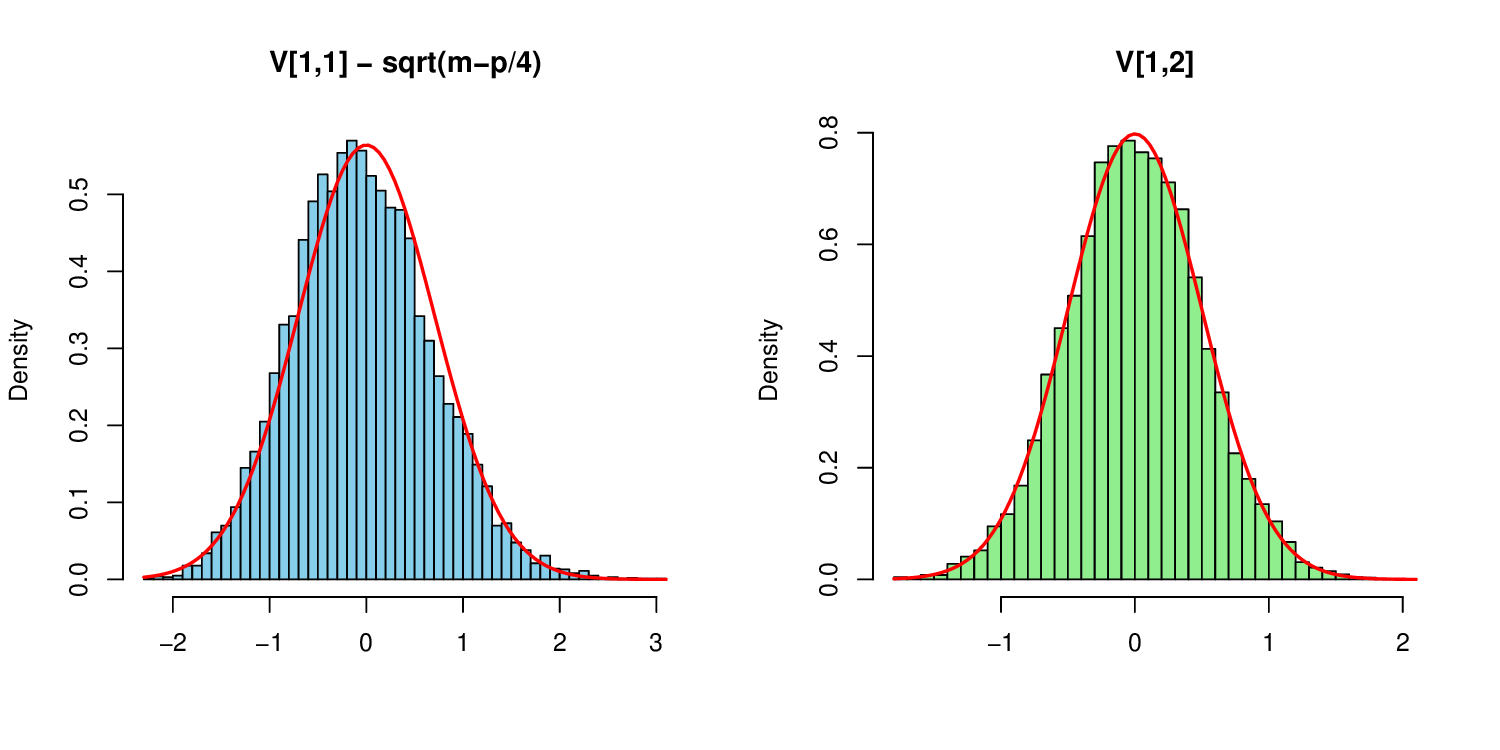}
        \caption{Comparison of empirical densities  of \( V_{11} - \sqrt{m-\frac{p}{4}} \) and \( V_{12} \), overlaid with \( \mathcal{N}(0, 1/2) \) and \( \mathcal{N}(0, 1/4) \), respectively}
        \label{1251}
    \end{subfigure}
    \hfill
    \begin{subfigure}[t]{0.48\linewidth}
        \centering
        \includegraphics[width=\linewidth]{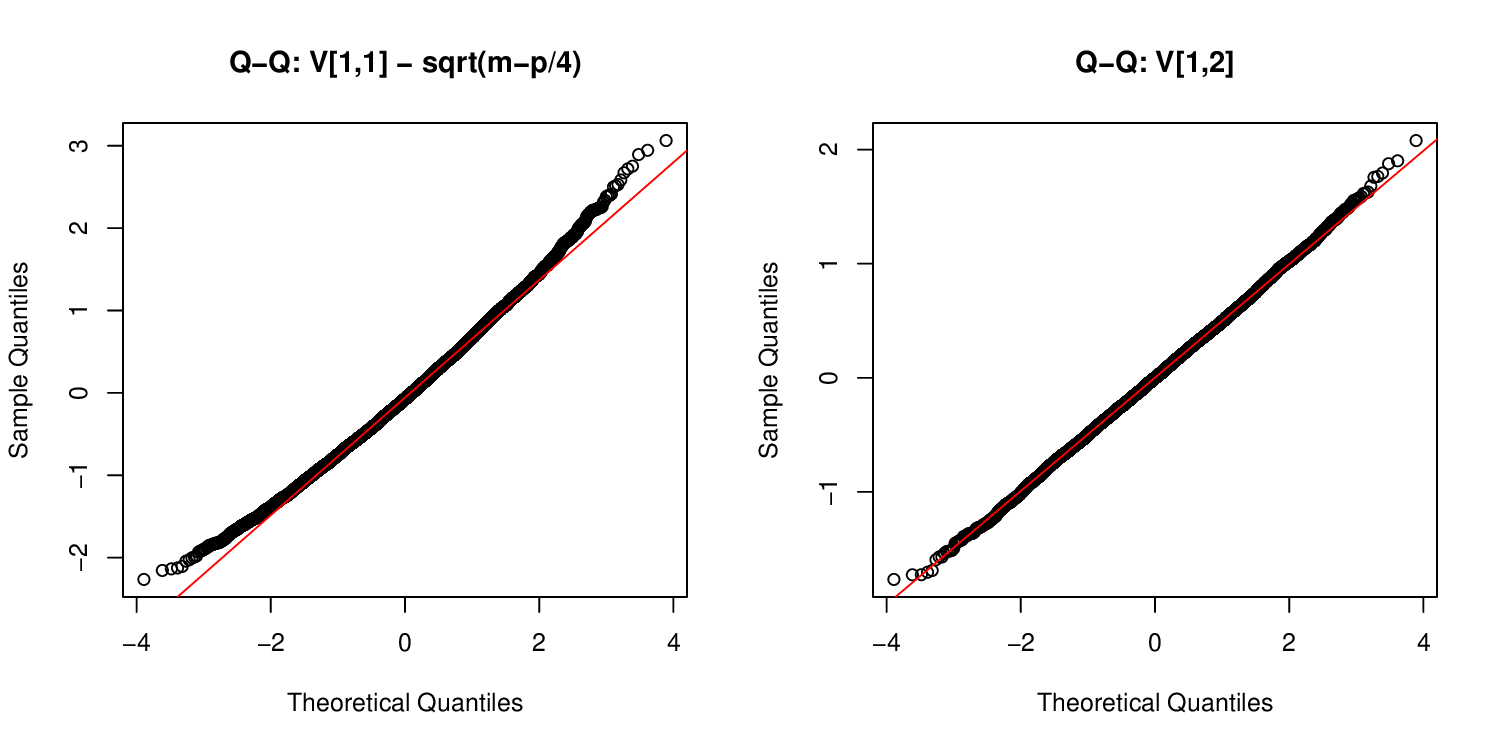}
        \caption{Q-Q plots comparing the empirical distributions of \( V_{11} - \sqrt{m-\frac{p}{4}} \) and \( V_{12} \) to their normal approximations}\label{1252}
        
    \end{subfigure}
    \caption{Distribution and normal Q-Q comparison for $V_{11}-\sqrt{m-\frac{p}{4}}$ and $V_{12}$ with $(m,p)=(12,5)$}
  \label{vcom}
\end{figure}

\begin{table}[H]
\centering
\caption{Empirical mean values of the first nine entries of $V = \sqrt{W}$ for $p=5$, $m=12$}
\label{tab:mean2}
\begin{tabular}{c|ccccccccc}
       & $V_{11}$ & $V_{22}$ &$V_{33}$ &$V_{44}$ &$V_{55}$& $V_{12}$ & $V_{13}$& $V_{14}$& $V_{15}$\\
\hline
mean& 3.230 & 3.241 & 3.230 & 3.246 & 3.232  &0.000 &-0.001 & 0.005& -0.003 \\

\end{tabular}
\end{table}
\begin{table}[H]
\centering
\caption{Empirical covariance matrix of the first nine entries of $V = \sqrt{W}$ for $p=5$, $m=12$}
\label{tab:covariance2}
\begin{tabular}{c|ccccccccc}
       & $V_{11}$ & $V_{22}$ &$V_{33}$ &$V_{44}$ &$V_{55}$& $V_{12}$ & $V_{13}$& $V_{14}$& $V_{15}$\\
\hline
$V_{11}$ & 0.501 &-0.005& -0.008& -0.010& -0.016&-0.006 &-0.009 & 0.007 &-0.002 \\
$V_{22}$ & -0.005 & 0.493& -0.002& -0.002& -0.014 & 0.004& -0.010 & 0.002 &-0.002 \\
$V_{33}$ & -0.008 &-0.002 & 0.497& -0.003& -0.012& -0.005  &0.001& -0.001 & 0.008 \\
$V_{44}$& -0.010& -0.002 &-0.003 & 0.497& -0.005 &-0.003 &-0.005 & 0.003 &-0.005\\
$V_{55}$&-0.016& -0.014 &-0.012 &-0.005 & 0.497 &-0.002  &0.001  &0.005& -0.002 \\
$V_{12}$&  -0.006 & 0.004& -0.005& -0.003 &-0.002 & 0.260& -0.001 &-0.006 &-0.001\\
$V_{13}$& -0.009 &-0.010 & 0.001 &-0.005&  0.001 &-0.001 & 0.259 & 0.002 & 0.002\\
$V_{14}$& 0.007 & 0.002 &-0.001 & 0.003 & 0.005& -0.006 & 0.002 & 0.254 & 0.004 \\
$V_{15}$& -0.002& -0.002  &0.008& -0.005 &-0.002& -0.001 & 0.002 & 0.004 & 0.255\\

\end{tabular}
\end{table}
We further examine the empirical covariances between selected entries for other combinations of $(m,p)$, as shown in Table \ref{tab:covariance3}.  Across all cases, the correlations between distinct elements remain close to zero, confirming that the asymptotic independence persists in higher-dimensional settings.

 \begin{table}[H]
\centering
\caption{Empirical covariance of certain pairs of entries for different $(m,p)$}
\label{tab:covariance3}
\begin{tabular}{c|cccccc}
    $(m,p)$   & cov($V_{11}$,$V_{11}$) & cov($V_{11}$,$V_{12}$) & cov($V_{12}$,$V_{13}$)& cov($V_{12}$,$V_{12}$) &cov($V_{11}$,$V_{22}$)\\
\hline
$(20,10)$ & 0.522  & 0.003 &-0.003& 0.253&-0.009\\
$(100,50)$ & 0.526 & -0.001 & 0.000&0.256&-0.007 \\
$(200,100)$&0.528 &-0.009   &0.004&0.260 &-0.002\\

\end{tabular}
\end{table}

\section{Conclusion}
In this paper, we have investigated both the explicit density function and the asymptotic distribution of the (symmetric) square root of a Wishart matrix.
We established that it converges to a multivariate normal distribution corresponding to one-half of a Gaussian Wigner matrix.
Although similar asymptotic results can be obtained using the delta method, our approach has the distinct advantage of yielding explicit convergence rates, which we show to be of order 
$O(p^{2.5}/\sqrt{m})$. The simulation studies corroborate our theoretical findings, demonstrating that the empirical distributions align closely with the limiting Gaussian law even when the degrees of freedom 
$m$ are only moderately larger than 
$p$.
This suggests that the asymptotic regime is reached rapidly, enhancing the practical relevance of the proposed approximation. 
The analytical framework developed here may also be extended to other matrix functionals in high-dimensional statistics.
Future research directions include deriving sharper upper bounds for the 1-Wasserstein distance and investigating convergence rates under alternative probabilistic metrics such as the total variation and Kolmogorov distances.

\section*{Acknowledgments }

The author  thanks Professor Yaning Yang for valuable comments and suggestions.

\end{document}